\newcommand{\bea}{\begin{eqnarray}}
\newcommand{\eea}{\end{eqnarray}}
\def\beaa{\begin{eqnarray*}}
\def\eeaa{\end{eqnarray*}}
\def\ba{\begin{array}}
\def\ea{\end{array}}
\def\be#1{\begin{equation} \label{#1}}
\def \eeq{\end{equation}}
\def\a{{\alpha}}
\def\be{{\beta}}
\def\ga{\gamma}
\def\ep{\epsilon}
\def\al{\alpha}
\def\c{\cdot}
\def\H{{\mathbb{H}}}
\def\R{{\mathbb{R}}}
\def\C{{\mathbb{C}}}
\def\S{{\mathbb{S}}}
\def\D{{\bf D}}
\def\c{{\bf c}}
\def\g{{\bf g}}
\DeclareMathOperator{\ch}{ch}
\DeclareMathOperator{\sh}{sh}
\newtheorem{theorem}{Theorem}[section]
\newtheorem{lemma}[theorem]{Lemma}
\newtheorem{proposition}[theorem]{Proposition}
\newtheorem{remark}[theorem]{Remark}
\numberwithin{equation}{section}
\begin{document}

\title[Global well-posedness and scattering in $H^1$]{Semilinear Schr\"{o}dinger flows on hyperbolic spaces: scattering in $H^1$}

\author{Alexandru D. Ionescu}
\address{University of Wisconsin--Madison}
\email{ionescu@math.wisc.edu}

\author{Gigliola Staffilani}
\address{Massachusetts Institute of Technology}
\email{gigliola@math.mit.edu}

\thanks{The first author was supported in part by a Packard Fellowship. The second author was supported in part by NSF Grant DMS 0602678.}

\begin{abstract}
We prove global well-posedness and scattering in $H^1$ for the defocusing nonlinear Schr\"{o}dinger equations
\begin{equation*}
\begin{cases}
&(i\partial_t+\Delta_\g)u=u|u|^{2\sigma};\\
&u(0)=\phi,
\end{cases}
\end{equation*}
on the hyperbolic spaces $\H^d$, $d\geq 2$, for exponents $\sigma\in(0,2/(d-2))$. The main unexpected conclusion is scattering to linear solutions in the case of small exponents $\sigma$; for comparison, on Euclidean spaces scattering in $H^1$ is not known for any exponent $\sigma\in(1/d,2/d]$ and is known to fail for $\sigma\in(0,1/d]$. Our main ingredients are certain noneuclidean global in time Strichartz estimates and noneuclidean Morawetz inequalities.  
\end{abstract}
\maketitle
\tableofcontents

\section{Introduction}\label{Intro}

In this paper we consider semilinear Schr\"{o}dinger initial-value problems of the form
\begin{equation}\label{eq1}
\begin{cases}
&(i\partial_t+\Delta_\g)u=N(u);\\
&u(0)=\phi,
\end{cases}
\end{equation}
on Riemannian manifolds $(M,\g)$ of dimensions $d\geq 2$. Typical nonlinearity are
\begin{equation}\label{NU}
N(u)=\lambda u|u|^{2\sigma}\text{ for suitable }\sigma\in(0,\infty),\,\,\lambda=\pm1.
\end{equation}
The initial-value problem \eqref{eq1} has been studied extensively in the Euclidean geometry for large classes of nonlinearities, see the recent books \cite{Cazenave:book} and \cite{Tao:book}, and the references therein. For example, on Euclidean spaces, it is known that the defocusing $H^1$ subcritical initial-value problem \eqref{eq1}, where $N(u)=u|u|^{2\sigma}$, $\sigma\in(0,2/(d-2))$, is globally well-posed in the energy space $H^1$; moreover, scattering to linear solutions is known in the restricted range $\sigma\in(2/d,2/(d-2))$, see \cite{GiVe} and \cite{Na}. In recent years, the more delicate problems that correspond to critical power nonlinearities, both in $\dot{H}^1$ and $L^2$, have also been considered, see \cite{B, G, Tcrit, CKSTTcrit, RV, V, KeMe, TVZ, KTV}. 

The initial-value problem \eqref{eq1} has also been considered in the setting of compact Riemannian manifolds $(M,\g)$, see \cite{Bo2, Bo1, BuGeTz, BuGeTz2}. In this case the conclusions are generally weaker than in Euclidean spaces: there is no scattering to linear solutions, or some other type of asymptotic control of the nonlinear evolution as $t\to\infty$. Moreover, in certain cases such as the spheres $\mathbb{S}^d$, the well-posedness theory requires sufficiently subcritical nonlinearities, due to concentration of certain spherical harmonics, see \cite{BuGeTz3}.

In this paper we consider the initial-value problem \eqref{eq1} in the setting of symmetric spaces of noncompact type\footnote{The symmetric spaces of noncompact type are simply connected Riemannian manifolds of nonpositive sectional curvature, without Euclidean factors, and for which every geodesic symmetry defines an isometry.}. The simplest such spaces are the hyperbolic spaces $\H^d$, $d\geq 2$. On hyperbolic spaces we prove {\it{stronger}} theorems than on Euclidean spaces. For the linear flow we prove a larger class of global in time Strichartz estimates (see \eqref{Strex} or Proposition \ref{Strichartz}, for radial functions these were already proved in \cite{Ba, BaDu, Pi, BaCaSt}). For the nonlinear flow with $N(u)=u|u|^{2\sigma}$, $\sigma\in(0,2/(d-2))$, we prove noneuclidean Morawetz inequalities, such as \eqref{Moraex}, which lead to large data scattering in $H^1$ in the full subcritical range $\sigma\in(0,2/(d-2))$. These stronger theorems are possible because of the more robust geometry at infinity of noncompact symmetric spaces compared to Euclidean spaces; for example, the scattering result for the nonlinear Schr\"{o}dinger equation can be interpreted as the absence of long range effects of the nonlinearity.  This absence of long range effects was already observed in  \cite{BaCaSt} under radial symmetry assumptions for $d=3$. 

After we finished typing this paper we learned that J.-P. Anker and V. Pierfelice also had obtained  Strichartz estimates  similar to the ones we prove here and  small data scattering results (see \cite{AnPi}).

Assume from now on that $M=\H^d$ is a hyperbolic space and $N(u)=u|u|^{2\sigma}$, $\sigma\in(0,2/(d-2))$. Suitable solutions on the time interval $(-T,T)$ of \eqref{eq1} satisfy (at least formally) mass and energy conservation,
\begin{equation}\label{conserve}
\begin{split}
&E^0(u)(t):=||u(t)||_{L^2(\H^d)}=E^0(u)(0);\\
&E^1(u)(t):=\frac{1}{2}\int_{\H^d}|\nabla u(t)|^2\,d\mu+\frac{1}{2\sigma+2}\int_{\H^d}|u(t)|^{2\sigma+2}\,d\mu=E^1(u)(0),
\end{split}
\end{equation}
for any $t\in(-T,T)$. Our main theorem concerns global well-posedness and scattering in $H^1$ for the initial-value problem \eqref{eq1}. To state the theorem precisely we need to define the Strichartz space $S^1_q(I)$. For any bounded, open interval $I=(a,b)\subseteq\R$ and any exponent $q\in(2,(2d+4)/d]$, let $r=2dq/(dq-4)\in[(2d+4)/d,2d/(d-2))$ and define the Banach space 
\begin{equation}\label{keyspace1}
\begin{split}
S^0_q(I)=\{f\in C(I:L^2(\H^d)):||f||_{S^0_q(I)}=\sup\big[||f||_{L^{\infty,2}_I},||f||_{L^{q,r}_I},||f||_{L^{q,q}_I}\big]<\infty\},
\end{split}
\end{equation}
where, by definition,
\begin{equation*}
||f||_{L^{p_1,p_2}_I}=\Big[\int_{I}\Big(\int_{\H^d}|f(t,x)|^{p_2}\,dx\Big)^{p_1/p_2}\,dt\Big]^{1/p_1}
\end{equation*} 
is the usual space-time Strichartz norm on the time interval $I$. Notice that the pair $(q,r)$ in \eqref{keyspace1} is an admissible pair, i.e. 
\begin{equation*}
\frac{2}{q}=d\Big(\frac{1}{2}-\frac{1}{r}\Big)\quad \text{ and }\quad(q,r)\in[2,\infty]\times[2,\infty),
\end{equation*}
and, in addition, $2<q\leq r$. We define the Banach space $S^1_q(I)$\footnote{On hyperbolic spaces, the symbol of the operator $-\Delta$ is the multiplier $(\lambda^2+\rho^2)$, see section \ref{preliminaries} for notation; in particular $||f||_{L^p(\H^d)}\leq C_p||(-\Delta)^{1/2}f||_{L^p(\H^d)}$ for any $p\in(1,\infty)$ therefore $S^1_q(I)\hookrightarrow S^0_q(I)$.},
\begin{equation}\label{keyspace2}
\begin{split}
 S^1_q(I)=\{f\in C(I:H^1(\H^d)):||f||_{S^1_q(I)}=||(-\Delta)^{1/2}(f)||_{S^0_q(I)}<\infty\}.
\end{split}
\end{equation}
We state now our main theorem.

\begin{theorem}\label{Main1}
Assume $\sigma\in (0,2/(d-2))$ and $q\in(2,(2d+4)/d]$ is fixed.

(a) (Global well-posedness) If $\phi\in H^1(\H^d)$ then there exists a unique global solution $u\in C(\R:H^1(\H^d))$ of the initial-value problem
\begin{equation}\label{eq1.1}
\begin{cases}
&(i\partial_t+\Delta_\g)u=u|u|^{2\sigma}\text{ in }C(\R:H^{-1}(\H^d));\\
&u(0)=\phi.
\end{cases}
\end{equation}
In addition, for any $T\in[0,\infty)$, the mapping $$\phi\to U_T(\phi)=\mathbf{1}_{(-T,T)}(t)\cdot u$$ is a continuous mapping from $H^1(\H^d)$ to $ S^1_q(-T,T)$, and the conservation laws \eqref{conserve} are satisfied.

(b) (Scattering) We have the uniform bound
\begin{equation}\label{sol2}
||u||_{S^1_q(-T,T)}\leq C(\sigma,q,||\phi||_{H^1(\H^d)})
\end{equation}
for any $T\in[0,\infty)$. As a consequence, there exist unique $u_{\pm}\in H^1(\H^d)$ such that
\begin{equation}\label{sol4}
||u(t)-W(t)u_{\pm}||_{H^1(\H^d)}=0\text{ as }t\to\pm\infty.
\end{equation}   
\end{theorem}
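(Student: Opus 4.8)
The plan for part (a) is the standard Strichartz-based contraction scheme, adapted to $\H^d$. Using the global-in-time Strichartz estimates of Proposition~\ref{Strichartz}, the Sobolev embeddings on $\H^d$ (which hold with the Euclidean exponents) and H\"older's inequality, one estimates the nonlinearity $N(u)=u|u|^{2\sigma}$ in a dual Strichartz norm and solves the Duhamel equation $u(t)=W(t)\phi-i\int_0^t W(t-s)N(u(s))\,ds$, $W(t)=e^{it\Delta_\g}$, by a fixed point in a ball of $S^1_q(I)$; since $\sigma<2/(d-2)$ the problem is $H^1$-subcritical, so the length of $I$ depends only on $\|\phi\|_{H^1}$ (with the usual care in the difference estimates when $2\sigma<1$). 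The conservation laws \eqref{conserve} are verified first for smooth, rapidly decaying data by integration by parts and then in general by density and continuity of the flow. Global existence is then immediate in the defocusing case: energy conservation, together with the fact that the $L^2$-spectrum of $-\Delta_\g$ lies in $[\rho^2,\infty)$ with $\rho=(d-1)/2>0$, so that $E^1(u)$ alone controls $\|u\|_{H^1}^2$, bounds $\|u(t)\|_{H^1}\leq C(\sigma,\|\phi\|_{H^1})$ for all $t$, and the local solution extends to $\R$. Uniqueness in $C(\R:H^1)$ and continuity of $\phi\mapsto U_T(\phi)$ into $S^1_q(-T,T)$ follow from a Strichartz stability argument, using that any $C(I:H^1)$ solution automatically lies in $S^1_q(I')$ on short subintervals.

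For part (b), everything reduces to the uniform bound \eqref{sol2}; once it holds, \eqref{sol4} is routine. The crucial input is the noneuclidean Morawetz inequality \eqref{Moraex}. Its advantage over the Euclidean analogue is geometric: the Morawetz multiplier attached to the distance function $r=d(o,x)$ to a base point $o\in\H^d$ produces a positive spacetime density whose coefficient in front of $|u|^{2\sigma+2}$ is $\Delta r=(d-1)\coth r$, which stays $\geq d-1>0$ everywhere rather than decaying like $(d-1)/|x|$ as in $\R^d$. Consequently one obtains a global, essentially unweighted bound, schematically
\[
\|u\|_{L^{2\sigma+2}_{t,x}(\R\times\H^d)}\leq C(\sigma,\|\phi\|_{H^1})
\]
(after separating off a bounded neighbourhood of $o$, where $r$ is singular), in contrast with the $1/|x|$-weighted Euclidean estimate. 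This unweighted bound is precisely what opens up the full range $\sigma\in(0,2/(d-2))$, including the small exponents for which Euclidean scattering is open or false.

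The main obstacle is to upgrade this a priori bound into the Strichartz bound \eqref{sol2}, uniformly in $T$. I would do this by an interval-subdivision argument: using finiteness of the Morawetz norm, partition $\R$ into $J=J(\sigma,\|\phi\|_{H^1})$ consecutive intervals on each of which $\|u\|_{L^{2\sigma+2}_{t,x}}$ lies below a small threshold $\eta(\sigma,q,\|\phi\|_{H^1})$. On each such interval $I_j$, one works on $I_j\cap(-T,T)$, runs the Strichartz estimate there, and estimates the nonlinearity by placing a single factor (carrying the derivative, in the $H^1$-level estimate) in a Strichartz norm and the remaining $2\sigma$ powers of $u$ in a norm interpolated between the Morawetz norm and the uniform $L^\infty_tH^1$ bound from part (a); a bootstrap argument then yields $\|u\|_{S^1_q(I_j\cap(-T,T))}\leq C(\sigma,q,\|\phi\|_{H^1})$ with $C$ independent of $T$, and summing over $j$ and letting $T\to\infty$ gives \eqref{sol2}. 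The delicate point is that for small $\sigma$ the exponent $2\sigma+2$ is close to $2$, so the interpolation recovering the Strichartz exponents is tight and only just closes; it closes here precisely because the hyperbolic Morawetz bound is unweighted, whereas the analogous Euclidean book-keeping, handicapped by the $1/|x|$ weight, only reaches $\sigma>2/d$ (and scattering genuinely fails for $\sigma\leq 1/d$). It suffices to run all of this for one convenient exponent $q$, since the Strichartz estimates then transfer the bound to every admissible $q$ in the stated range.

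With \eqref{sol2} established, the scattering states are obtained in the usual way. From the Duhamel identity $W(-t)u(t)=\phi-i\int_0^t W(-s)N(u(s))\,ds$ and the Strichartz estimates, finiteness of $\|u\|_{S^1_q(\R)}$ forces $\|u\|_{S^1_q((t_1,t_2))}\to 0$ as $t_1,t_2\to+\infty$, so $\int_0^{+\infty}W(-s)N(u(s))\,ds$ converges in $H^1(\H^d)$; setting $u_+:=\phi-i\int_0^{+\infty}W(-s)N(u(s))\,ds$, and $u_-$ analogously, one has $\|u(t)-W(t)u_\pm\|_{H^1}=\|W(-t)u(t)-u_\pm\|_{H^1}\to 0$ as $t\to\pm\infty$, which is \eqref{sol4}, and $u_\pm$ are unique because $W(t)$ is unitary on $H^1(\H^d)$.
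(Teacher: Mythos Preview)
Your overall architecture matches the paper's: part~(a) by the standard Kato contraction using the Strichartz estimates of Section~\ref{Strichartz} (the paper simply cites \cite{Ka} here), and part~(b) by Morawetz $\Rightarrow$ subdivision of $(-T,T)$ into intervals on which $\|u\|_{L^{2\sigma+2}_{t,x}}\le\varepsilon$ $\Rightarrow$ Strichartz bootstrap on each piece $\Rightarrow$ summation and the usual Duhamel--Cauchy construction of $u_\pm$. That is exactly the scheme in Section~\ref{proofScat}.

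The substantive divergence is your mechanism for the Morawetz inequality \eqref{Moraex}. You take the multiplier to be the distance function $a=r$ and point to $\Delta r=(d-1)\coth r\ge d-1$. The paper does \emph{not} use $r$; instead (Lemma~\ref{functiona}) it solves the radial ODE $\widetilde a''+(d-1)\coth r\,\widetilde a'=1$ to produce a smooth $a$ with $\Delta a\equiv 1$, $|\nabla a|\le C$, and $\D^2 a\ge 0$ on all of $\H^d$. With this choice the bilaplacian term $-\int\Delta^2 a\,|u|^2$ in the virial identity vanishes identically, the Hessian term is nonnegative, and the nonlinear term has constant coefficient; no cutoff near the origin is needed. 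Your weight $a=r$ gives instead $\Delta^2 r=(d-1)(3-d)\cosh r/\sinh^3 r$, which for $d=2$ is strictly positive everywhere, so the term $-\int\Delta^2 r\,|u|^2$ has the wrong sign on all of $\H^2$; after time integration it is not dominated by $\sup_t\|u\|_{L^2}\|u\|_{H^1}$, and the Hessian term (which vanishes on radial functions) cannot absorb it. Your approach therefore works for $d\ge 3$ with a regularization at the origin, but has a genuine gap in $d=2$; the paper's construction of $a$ is precisely what makes the argument uniform in $d\ge 2$ and cutoff-free.

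One smaller point on the bootstrap: the paper places the nonlinearity in the \emph{diagonal} dual space $L^{p_\sigma'}_{t,x}$, $p_\sigma=\min(2\sigma+2,(2d+4)/d)$, and interpolates the undifferentiated factors between $L^{2\sigma+2}_{t,x}$ and the full $S^1_q$ norm via \eqref{Lpbound}. This is where the noneuclidean $L^{q,q}_I$ component of the Strichartz space actually enters; for small $\sigma$ the Euclidean admissible pairs alone would not accept $L^{p_\sigma'}_{t,x}$. Your sketch is compatible with this since you work in $S^1_q$, but both noneuclidean ingredients---the unweighted Morawetz \emph{and} the diagonal Strichartz estimate \eqref{Strex}---are needed to close the loop.
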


The main conclusion of the theorem is the $H^1$ scattering and the uniform bound \eqref{sol2}, particularly in the case of small exponents $\sigma\in(0,2/d]$. We also emphasize that this theorem does not require radial symmetry. We recall that the existence of the wave operator was already proved in \cite{BaCaSt} (for $\sigma\in(0,2/(d-2))$), under a radial symmetry assumption. In this same paper scattering was only proved  for $d=3$, still in the range  $\sigma\in(0,2)$ and under the radial symmetry assumption. Without the radial symmetry condition, scattering was only proved there  for $\sigma\in(2/3,2)$, $d=3$ . 

Simultaneously to this work in \cite{BaCaDu} the authors  also proved  $H^1$ scattering for 
$\sigma\in(0,2/(d-2))$ and $d\geq 3$ under radial symmetry assumptions.

We make several remarks.

\begin{remark}
It is easy to see that the proof of Theorem \ref{Main1} extends to more general defocusing $H^1$ subcritical nonlinearities. Theorem \ref{Main1} also extends to the more general setting of noncompact symmetric spaces of real rank one, since the key ingredients, such as the inequality \eqref{special}, the formulas \eqref{Ke4} and \eqref{Ke6}, and Lemma \ref{functiona}, have suitable analogues on noncompact symmetric spaces of real rank one. We stated Theorem \ref{Main1} in the special case of power nonlinearities and in the setting of hyperbolic spaces mostly for the sake of concreteness. On the other hand, on general noncompact symmetric spaces, simple identities such as \eqref{Ke4} and \eqref{Ke6} do not hold (with uniform bounds on the symbols), and the analysis on such spaces is more delicate, see for example \cite{AnJi} for bounds on the heat kernel. We hope to return to such questions, as well as the more delicate questions related to critical nonlinearities.  
\end{remark}

\begin{remark}
The  conclusion of Theorem \ref{Main1} (b) is in sharp contrast with its Euclidean analogue: on Euclidean spaces scattering to linear solutions is only known for exponents $\sigma\in(2/d,2/(d-2))$, see \cite{GiVe, Na, CKSTTscat, CHVZ, TVZmixed}, and also for $\sigma=2/(d-2)$, see \cite{B, CKSTTcrit, Tcrit, RV, V}. Moreover, on Euclidean spaces, scattering in $H^1$ is known to fail for exponents $\sigma\in(0,1/d]$, see \cite{Barab, Strauss}. Even  the easier question of existence of the wave operator  in the range $(1/d,2/d]$  is not settled yet, and in particular  no (unweighted) scattering results are known for  the  range $\sigma\in(1/d,2/d].$ Recently  scattering in $L^2$ has been proved for $\sigma=2/d$ under the radial symmetry assumption \cite{KTV, TVZ}.
\end{remark}

\begin{remark}
We observe that in the case in which $\sigma$ is a natural number one can also prove preservation of regularity in the sense that for any $\phi\in H^s$, $s\geq 1$, the solution $u$ to \eqref{eq1.1} is in $C(\R:H^s(\H^d))$. Combining this fact with the scattering result above one can easily show that
\begin{equation}\label{wt}
\|u(t)\|_{H^s}\lesssim C_s \, \, \, \mbox{ as } \, \, t \rightarrow \pm\infty.
\end{equation}
This global estimate can be also interpreted as lack of weak turbulence for the initial value problem \eqref{eq1.1}. In the Euclidean spaces a similar result is  also available, but only for  $\sigma\in(2/d,2/(d-2))$, in particular a bound as in \eqref{wt} is still unknown for the smooth solutions of the cubic NLS when  $d=2$, $\sigma=1$, see \cite{CDKS}.
\end{remark}

We describe now the two main noneuclidean ingredients needed in the proof of Theorem \ref{Main1} (b). The first ingredient is the inequality \eqref{special}
\begin{equation}\label{special2}
\|f\ast |K|\,\|_{L^2(\H^d)}\leq C\|f\|_{L^2(\H^d)}\cdot\int_{0}^\infty |K(r)|e^{-\rho r}(r+1)(\sh r)^{2\rho}\,dr, 
\end{equation}
for any $f,K\in C_0^\infty(\H^d)$, provided that $K$ is a radial kernel. For comparison, $\|K\|_{L^1(\H^d)}=C\int_{0}^\infty |K(r)|(\sh r)^{2\rho}\,dr$, thus the factor $e^{-\rho r}(r+1)$ in \eqref{special2} represents a nontrivial gain\footnote{This gain is related to the Kunze--Stein phenomenon $L^2(\mathbb{G})\ast L^p(\mathbb{G})\subseteq L^2(\mathbb{G})$ for any $p\in[1,2)$, where $\mathbb{G}$ is the Lorentz group $SO(d,1)$, see \cite{KuSt} and \cite{Co}.} over the (best possible) Euclidean inequality $\|f\ast |K|\,\|_{L^2(\R^d)}\leq \|f\|_{L^2(\R^d)}\|K\|_{L^1(\R^d)}$. We exploit this gain in section \ref{Strichartz} to prove the noneuclidean Strichartz estimates (as well as suitable inhomogeneous estimates)
\begin{equation}\label{Strex}
\|W(t)\phi\|_{L^{q}(\R\times\H^d)}\leq C_q\|\phi\|_{L^2}\text{ for any }q\in(2,(2d+4)/d]. 
\end{equation}
In Section \ref{Strichartz} we prove these Strichartz estimates, together with the nonendpoint Euclidean-type Strichartz estimates\footnote{These were already proved locally in time in \cite{Ba}. However, since our main goal is to prove scattering, we have to obtain global in time estimates.} (and suitable inhomogeneous estimates)$$\|W(t)\phi\|_{L^{\infty,2}_I\cap L^{q,r}_I}\leq C_q\|\phi\|_{L^2}.$$

The second noneuclidean ingredient we need is the existence of a smooth radial function $a:\H^d\to[0,\infty)$ with the properties
\begin{equation*}
\Delta a=1\text{ and }|\nabla a|\leq C\quad\text{ on }\H^d.
\end{equation*}
We construct such a function, with the additional property $\D^2 a\geq 0$, in Lemma \ref{functiona}. In section \ref{Mora}, we use this function and standard arguments to prove the Morawetz inequality
\begin{equation}\label{Moraex}
\|u\|^{2\sigma+2}_{L^{2\sigma+2}((-T,T)\times\H^d)}\leq C_\sigma\sup_{t\in (-T,T)}\|u(t)\|_{L^2(\H^d)}\|u(t)\|_{H^1(\H^d)},
\end{equation}
for any solution $u\in S^1_q(-T,T)$ of the nonlinear Schr\"{o}dinger equation, with a constant $C$ that does not depend on $T$. Theorem \ref{Main1} (b) follows, using mostly standard arguments, from this Morawetz inequality and the noneuclidean Strichartz estimates described above (in the most interesting case $\sigma\in(0,2/d]$ we combine the Morawetz inequality \eqref{Moraex} with the noneuclidean Strichartz estimate \eqref{Strex} for $q=2\sigma+2$).

We discuss now the definition of the Strichartz spaces $S^j_q(I)$, $j=0,1$, see \eqref{keyspace1} and \eqref{keyspace2}. On bounded intervals $I$ (i.e. if $|I|\leq C$) the space $S^0_q(I)$ is equivalent to Euclidean-type Strichartz space $L^{\infty,2}_I\cap L^{q,r}_I$, where $(q,r)$ is an admissible pair with $q$ close to $2$\footnote{In Euclidean spaces, in dimensions $d\geq 3$, one can also use the endpoint pair $(q,r)=(2,2d/(d-2))$. Since this paper is concerned with theorems in all dimensions $d\geq 2$, we do not discuss the Keel-Tao \cite{KeTa} endpoint Strichartz estimate which holds in dimensions $d\geq 3$.}. This is because $$\|f\|_{L^{q,q}_I}\leq |I|^{1/q-1/p}\|f\|_{L^{p,q}_I}\leq C|I|^{1/q-1/p}(\|f\|_{L^{\infty,2}_I}+\|f\|_{L^{q,r}_I}),$$
where $p\in[(2d+4)/d,\infty)$ is such that $(p,q)$ is an admissible pair. The component $L^{q,q}_I$ in the definition of the space $S^0_q(I)$ is related to the noneuclidean Strichartz estimate \eqref{Strex} and the Morawetz inequality \eqref{Moraex}. This component becomes important in the proof of Theorem \ref{Main1} (b) since, to prove scattering, we need uniform estimates over long intervals. Using interpolation and Sobolev embedding, we have the $L^p$ bounds, uniformly in $|I|$,
\begin{equation}\label{Lpbound}
\|(-\Delta)^{1/2}f\|_{L^{p_1}(I\times\H^d)}+\|f\|_{L^{p_2}(I\times\H^d)}\leq C_{p_2}\|f\|_{S^1_q(I)},
\end{equation}    
for any $f\in S^1_q(I)$, $p_1\in[q,(2d+4)/d]$, and $p_2\in[q,(2d+4)/(d-2))$ (in dimensions $d\geq 3$ one can also take $p_2=(2d+4)/(d-2)$). In particular, $S^j_{q_1}(I)\hookrightarrow S^j_{q_2}(I)$ if $q_1\leq q_2$ and $j=0,1$, therefore the conclusions of Theorem \ref{Main1} become stronger as $q$ approaches $2$.

The rest of the paper is organized as follows: in section \ref{preliminaries} we introduce most of our notation related to hyperbolic spaces. The harmonic analysis and the geometry of hyperbolic spaces is very rich, due to the large groups of isometries they admit (the Lorentz groups $SO(d,1)$) and the special structure of these groups (semisimple Lie groups of real rank one). In section \ref{Strichartz} we prove our main Strichartz estimates, see Proposition \ref{Strichartz}. The global in time Strichartz estimates are new in the setting of hyperbolic spaces, with the exception of the case of radial functions (see  \cite{Ba}, \cite{Pi}, \cite{BaCaSt} for Strichartz estimates for radial functions). In section \ref{Mora} we prove our main Morawetz inequality, see Proposition \ref{MoraIneq}. Finally, in section \ref{proofScat} we use the standard combination of Strichartz estimates and Morawetz inequality to prove Theorem \ref{Main1}.   

\section{Preliminaries}\label{preliminaries}

In this section we discuss some aspects of the harmonic analysis and the geometry of hyperbolic spaces. The hyperbolic spaces are the simplest examples of noncompact symmetric spaces of real rank one (see for example the books \cite{He1} and \cite{He2} for a comprehensive view of the analysis and geometry on symmetric spaces and semisimple Lie groups). A short elementary exposition of some of the concepts needed in this paper, specialized to the case of hyperbolic spaces, can be found in \cite{Br}.  

\subsection{Riemannian structure and isometries of hyperbolic spaces} 
We consider the Minkowski space $\R^{d+1}$ with the standard Minkowski metric $-(dx^0)^2+(dx^1)^2+\ldots+(dx^d)^2$ and define the bilinear form on $\R^{d+1}\times\R^{d+1}$,
\begin{equation*}
[x,y]=x^0y^0-x^1y^1-\ldots-x^dy^d.
\end{equation*}
The hyperbolic space $\H^d$ is defined as 
\begin{equation*}
\H^d=\{x\in\R^{d+1}:[x,x]=1\text{ and }x^0>0\}.
\end{equation*}
Let ${\bf{0}}=(1,0,\ldots,0)$ denote the origin of $\H^d$. The Minkowski metric on $\R^{d+1}$ induces a Riemannian metric $\g$ on $\H^d$, with covariant derivative $\D$ and induced measure $d\mu$.

We define ${\mathbb G}=SO(d,1)=SO_e(d,1)$ as the connected Lie group of $(d+1)\times(d+1)$ matrices that leave the form $[.,.]$ invariant. Clearly, $X\in SO(d,1)$ if and only if
\begin{equation*}
{}^{tr}X\cdot I_{d,1}\cdot X=I_{d,1},\quad\det X=1,\quad X_{00}>0,
\end{equation*}
where $I_{d,1}$ is the diagonal matrix $\mathrm{diag}[-1,1,\ldots,1]$ (since $[x,y]=-{}^tx\cdot I_{d,1}\cdot y$). Let ${\mathbb K}=SO(d)$ denote the subgroup of $SO(d,1)$ that fix the origin ${\bf{0}}$. Clearly, $SO(d)$ is the compact rotation group acting on the variables $(x^1,\ldots,x^d)$. The hyperbolic space $\H^d$ can be identified with the homogeneous space $SO(d,1)/SO(d)={\mathbb G}/{\mathbb K}$, and the group $SO(d,1)$ acts transitively on $\H^d$. Moreover, for any $g\in SO(d,1)$ the mapping $L_g:\H^d\to\H^d$, $L_g(x)=g\cdot x$, defines an isometry of $\H^d$.

We introduce now a global system of coordinates on $\H^d$: we define the diffeomorphism $\Phi:\R^{d-1}\times\R\to\H^d$, 
\begin{equation}\label{ap1}
\Phi(v,s)={}^{tr}(\ch s+e^{-s}|v|^2/2,\sh s+e^{-s}|v|^2/2,e^{-s}v_1,\ldots,e^{-s}v_{d-1}),
\end{equation}
where $v=(v_1,\ldots,v_{d-1})\in\R^{d-1}$. This system of coordinates is related to the Iwasawa decomposition $\mathbb{G}=\mathbb{N}\mathbb{A}\mathbb{K}$, where $\mathbb{G}$ and $\mathbb{K}$ are the groups defined above, and $\mathbb{A}$, $\mathbb{N}$ are the subgroups of $\mathbb{G}$
\begin{equation}\label{subgroupa}
\mathbb{A}=\left\{a_s=\begin{bmatrix}
&\ch s&\sh s&0\\
&\sh s&\ch s&0\\
&0&0&I_{d-1}
\end{bmatrix}:t\in\R\right\},
\end{equation}
and
\begin{equation}\label{subgroupn}
\mathbb{N}=\left\{n_v=\begin{bmatrix}
&1+|v|^2/2&-|v|^2/2&{}^{tr}v\\
&|v|^2/2&1-|v|^2/2&{}^{tr}v\\
&v&-v&I_{d-1}
\end{bmatrix}:v\in\R^{d-1}\right\}.
\end{equation}
Thus $\Phi(v,s)=n_va_s\cdot {\bf{0}}$. Using this diffeomorphism we can identify $(\H^d,\g)$ with $\R^{d-1}\times\R$, with the induced Riemannian metric
\begin{equation*}
e^{-2s}[(dv_1)^2+\ldots+(dv_{d-1})^2+e^{2s}(ds)^2].
\end{equation*}
For $f\in C_0(\H^d)$ we have the integration formula
\begin{equation*}
\int_{\H^d}f(x)\,d\mu=\int_{\R^{d-1}\times\R}f(n_va_s\cdot{\bf{0}})e^{-2\rho s}\,dvds.
\end{equation*}
We fix the global orthonormal frame 
\begin{equation*}
e_\al=e^s\partial_{v_\al}\text{ for }\al=1,\ldots,d-1,\text{  and  }e_d=\partial_s.
\end{equation*}
We compute the commutators
\begin{equation*}
[e_d,e_\al]=e_\al,\quad [e_\al,e_\be]=[e_d,e_d]=0\quad\text{ for any }\al,\be=1,\ldots,d-1.
\end{equation*}
We use now the formula
\begin{equation*}
2\g(\D_{e_\al}e_\be,e_\ga)=\g([e_\al,e_\be],e_\ga)+\g([e_\ga,e_\al],e_\be)+\g([e_\ga,e_\be],e_\al)
\end{equation*}
to compute the covariant derivatives
\begin{equation}\label{table1}
\D_{e_\al}e_\be=\delta_{\al\be}e_d,\,\D_{e_\al}e_d=-e_\al,\,\D_{e_d}e_\al=\D_{e_d}e_d=0,\quad\text{ for }\al,\be=1,\ldots,d-1.
\end{equation}

Let $r:\H^d\to[0,\infty)$, $r(x)=d(x,{\bf{0}})$ denote the Riemannian distance function to the origin. We have
\begin{equation}\label{posdef}
\D^2 r\geq 0.
\end{equation}
This is well known consequence of the fact that $\H^d$ has negative constant sectional curvature; it can also be established by explicit computations using \eqref{table1} and the identity $\ch r=\ch s+e^{-s}|v|^2/2$.

The group ${\mathbb G}=SO(d,1)$ is semisimple (thus unimodular) and the group ${\mathbb K}=SO(d)$ is compact. We normalize the Haar measures on ${\mathbb K}$ and ${\mathbb G}$ such that $\int_{\mathbb K} 1\,dk=1$ and 
\begin{equation*}
\int_{\mathbb G}f(g\cdot {\bf{0}})\,dg=\int_{\H^d}f(x)\,d\mu
\end{equation*}
for any $f\in C_0(\H^d)$. Given two functions $f_1,f_2\in C_0({\mathbb G})$ we define the convolution
\begin{equation}\label{convo}
(f_1\ast f_2)(h)=\int_{\mathbb G}f_1(g)f_2(g^{-1}h)\,dg.
\end{equation}
A function $f:{\mathbb G}\to\mathbb{C}$ is called ${\mathbb K}$-biinvariant if 
\begin{equation}\label{inv1}
f(k_1gk_2)=f(g)\text{ for any }k_1,k_2\in\mathbb{K}.
\end{equation}
Similarly, a function $f:\H^d\to\mathbb{C}$ is called ${\mathbb K}$-invariant (or radial) if 
\begin{equation}\label{inv2}
f(k\cdot x)=f(x)\text{ for any }k\in\mathbb{K}\text{ and }x\in\H^d.
\end{equation}
If $f,K\in C_0(\H^d)$ and $K$ is $\mathbb{K}$-invariant then we define (compare to \eqref{convo})
\begin{equation}\label{convo2}
(f\ast K)(x)=\int_{\mathbb G}f(g\cdot{\bf{0}})K(g^{-1}\cdot x)\,dg.
\end{equation}

We record one more integral formula on $\H^d$, which corresponds to the decomposition $\mathbb{G}=\mathbb{K}\mathbb{A}_+\mathbb{K}$: if $f\in C_0(\H^d)$ then
\begin{equation}\label{car}
\int_{\H^d}f(x)\,d\mu=C\int_{\mathbb{K}}\int_{\R_+}f(ka_s\cdot{\bf{0}})(\sh s)^{d-1}\,dkdt,
\end{equation}
where $C$ is a suitable constant and $a_s$ is defined in \eqref{subgroupa}. Thus, if $f$ is $\mathbb{K}$-invariant then
\begin{equation}\label{car'}
\int_{\H^d}f(x)\,d\mu=C\int_{\R_+}f(a_s\cdot{\bf{0}})(\sh s)^{d-1}\,ds.
\end{equation}

\subsection{The Fourier transform on hyperbolic spaces}
The Fourier transform (as defined by Helgason \cite{He3}) takes suitable functions defined on $\H^d$ to functions defined on $\R\times\S^{d-1}$. For $\omega\in \S^{d-1}$ (in the general settting of noncompact symmetric spaces, $\omega\in\mathbb{K}/ \mathbb{M}$ where $\mathbb{M}$ is the centralizer of $\mathbb{A}$ in $\mathbb{K}$) and $\lambda\in\C$, let $b(\omega)=(1,\omega)\in\R^{d+1}$ and 
\begin{equation*}
h_{\lambda,\omega}:\H^d\to\C,\quad h_{\lambda,\omega}(x)=[x,b(\omega)]^{i\lambda-\rho},
\end{equation*}
where
\begin{equation*}
\rho=(d-1)/2.
\end{equation*}
It is known that
\begin{equation}\label{fou}
\Delta h_{\lambda,\omega}=-(\lambda^2+\rho^2)h_{\lambda,\omega},
\end{equation}
where $\Delta$ is the Laplace-Beltrami operator on $\H^d$. The Fourier transform of $f\in C_0(\H^d)$ is defined by the formula
\begin{equation}\label{fht}
\widetilde{f}(\lambda,\omega)=\int_{\H^d}f(x)h_{\lambda,\omega}(x)\,d\mu=\int_{\H^d}f(x)[x,b(\omega)]^{i\lambda-\rho}\,d\mu.
\end{equation}
This transformation admits a Fourier inversion formula: if $f\in C^\infty_0(\H^d)$ then
\begin{equation}\label{Finv}
f(x)=\int_0^\infty\int_{\S^{d-1}}\widetilde{f}(\lambda,\omega)[x,b(\omega)]^{-i\lambda-\rho}|\c(\lambda)|^{-2}\,d\lambda d\omega,
\end{equation}
where, for a suitable constant $C$,
\begin{equation*}
\c(\lambda)=C\frac{\Gamma(i\lambda)}{\Gamma(\rho+i\lambda)}
\end{equation*}
is the Harish-Chandra $\c$-function on $\H^d$, and the invariant measure of $\S^{d-1}$ is normalized to $1$. It follows from \eqref{fou} that
\begin{equation}\label{fou2}
\widetilde{\Delta f}(\lambda,\omega)=-(\lambda^2+\rho^2)\widetilde{f}(\lambda,\omega).
\end{equation}
We record also the nontrivial identity
\begin{equation*}
\int_{\S^{d-1}}\widetilde{f}(\lambda,\omega)[x,b(\omega)]^{-i\lambda-\rho}d\omega=\int_{\S^{d-1}}\widetilde{f}(-\lambda,\omega)[x,b(\omega)]^{i\lambda-\rho}d\omega
\end{equation*}
for any $f\in C^\infty_0(\H^d)$, $\lambda\in \C$, and $x\in\H^d$.

According to the Plancherel theorem, the Fourier transform $f\to\widetilde{f}$ extends to an isometry of $L^2(\H^d)$ onto $L^2(\R_+\times\S^{d-1},|\c(\lambda)|^{-2}d\lambda d\omega)$; moreover
\begin{equation}\label{Plancherel}
\int_{\H^d}f_1(x)\overline{f_2(x)}\,d\mu=\frac{1}{2}\int_{\R\times\S^{d-1}}\widetilde{f_1}(\lambda,\omega)\overline{\widetilde{f_2}(\lambda,\omega)}|\c(\lambda)|^{-2}\,d\lambda d\omega,
\end{equation}
for any $f_1,f_2\in L^2(\H^d)$. As a consequence, any bounded multiplier $m:\R_+\to\mathbb{C}$ defines a bounded operator $T_m$ on $L^2(\H^d)$ by the formula
\begin{equation}\label{defop}
\widetilde{T_m(f)}(\lambda,\omega)=m(\lambda)\cdot \widetilde{f}(\lambda,\omega).
\end{equation}
The question of $L^p$ boundedness of operators defined by multipliers as in \eqref{defop} is more delicate if $p\neq 2$. A necessary condition for boundedness on $L^p(\H^d)$ of the operator $T_m$ is that the multiplier $m$ extend to an even analytic function in the interior of the region $\mathcal{T}_p=\{\lambda\in\C:|\Im\lambda|<|2/p-1|\rho\}$ (see \cite{ClSt}). Conversely, if $p\in(1,\infty)$ and $m:\mathcal{T}_p\to\mathbb{C}$ is an even analytic function which satisfies the symbol-type bounds
\begin{equation}\label{difeq}
|\partial^\alpha m(\lambda)|\leq C(1+|\lambda|)^{-\alpha}\text{ for any }\al\in[0,d+2]\cap\mathbb{Z}\text{ and }\lambda\in\mathcal{T}_p,
\end{equation}  
then $T_m$ extends to a bounded operator on $L^p(\H^d)$ (see \cite{StTo}, and also \cite{ClSt}, \cite{An}, and \cite{Io2} for related statements at various levels of generality).

Assume now that $f\in C_0(\H^d)$ is $\mathbb{K}$-invariant, as in \eqref{inv2}. In this case the formula \eqref{fht} becomes
\begin{equation}\label{fhtrad}
\widetilde{f}(\lambda,\omega)=\widetilde{f}(\lambda)=\int_{\H^d}f(x)\Phi_{-\lambda}(x)\,d\mu,
\end{equation}
where 
\begin{equation}\label{elem}
\Phi_\lambda(x)=\int_{\S^{d-1}}[x,b(\omega)]^{-i\lambda-\rho}\,d\omega
\end{equation}
is the elementary spherical function. The Fourier inversion formula \eqref{Finv} becomes
\begin{equation}\label{Finvrad}
f(x)=\int_0^\infty\widetilde{f}(\lambda)\Phi_\lambda(x)|\c(\lambda)|^{-2}\,d\lambda,
\end{equation}
for any $\mathbb{K}$-invariant function $f\in C^\infty_0(\H^d)$. With the convolution defined as in \eqref{convo2}, we have the important identity
\begin{equation}\label{keyiden}
\widetilde{(f\ast K)}(\lambda,\omega)=\widetilde{f}(\lambda,\omega)\cdot\widetilde{K}(\lambda)
\end{equation}
for any $f,K\in C_0(\H^d)$, provided that $K$ is $\mathbb{K}$-invariant\footnote{Unlike in Euclidean Fourier analysis, there is no simple identity of this type without the assumption that $K$ is $\mathbb{K}$-invariant.}. The definition \eqref{elem} shows easily that $\sup_{\lambda\in\mathbb{R}}|\Phi_\lambda(x)|=\Phi_0(x)$. Thus, using the Plancherel theorem and \eqref{keyiden}
\begin{equation}\label{keyiden2}
||f\ast K||_{L^2(\H^d)}\leq ||f||_{L^2(\H^d)}\cdot\int_{\H^d}|K(x)|\Phi_0(x)\,d\mu
\end{equation}
for any $f,K\in C_0(\H^d)$, if $K$ is $\mathbb{K}$-invariant.

Given a $\mathbb{K}$-invariant (i.e. radial) function $f$ on $\H^d$ we define, by abuse of notation, $f(t):=f(ka_t\cdot{\bf{0}})$ for $t\in[0,\infty)$, where $k\in\mathbb{K}$ and $a_t$ is as in \eqref{subgroupa}. With this convention, the formula \eqref{elem} becomes, for $r\geq 0$,
\begin{equation}\label{reduced}
\Phi_\lambda (r)=C\int_{0}^\pi(\ch r-\sh r\cos\theta)^{-i\lambda-\rho}(\sin\theta)^{d-2}\,d\theta.
\end{equation}
Using this identity, it is easy to see that $\Phi_0(r)\leq Ce^{-\rho r}(r+1)$ for any $r\in\R_+$. It follows from \eqref{keyiden2} and \eqref{car} that
\begin{equation}\label{special}
||f\ast K||_{L^2(\H^d)}\leq C||f||_{L^2}\cdot\int_{0}^\infty |K(r)|e^{-\rho r}(r+1)(\sh r)^{2\rho}\,dr, 
\end{equation}
for any $f,K\in C_0(\H^d)$, if $K$ is $\mathbb{K}$-invariant. This inequality, which is a simple form of the more general Kunze--Stein phenomenon, plays a key role in the proof of the Strichartz estimates in section \ref{Strichartz} (in Lemma \ref{coro}).

We define now the inhomogeneous Sobolev spaces on $\H^d$. There are two possible definitions: using the Riemannian structure $\g$ or using the Fourier transform. These two definitions agree (see, for example, \cite[Section 3]{Ta}). In view of \eqref{fou2}, for $s\in\mathbb{C}$ we define the operator $(-\Delta)^{s/2}$ as given by the Fourier multiplier $\lambda\to (\lambda^2+\rho^2)^{s/2}$. For $p\in(1,\infty)$ and $s\in\R$ we define the Sobolev space $W^{p,s}(\H^d)$ as the closure of $C^\infty_0(\H^d)$ under the norm
\begin{equation*}
||f||_{W^{p,s}(\H^d)}=||(-\Delta)^{s/2}f||_{L^p(\H^d)}.
\end{equation*}
For $s\in \R$ let $H^s=W^{2,s}$. This definition is equivalent to the usual definition of the Sobolev spaces on Riemannian manifolds (this is a consequence of the fact that the operator $(-\Delta)^{s/2}$ is  bounded on $L^p(\H^d)$ for any $s\in\C$, $\Re s\leq 0$, since its symbol satisfies the differential inequalities \eqref{difeq}). In particular, for $s=1$ and $p\in(1,\infty)$
\begin{equation}\label{sobiden}
||f||_{W^{p,1}(\H^d)}=||(-\Delta)^{1/2}f||_{L^p(\H^d)}\approx\Big[\int_{\H^d}|\D^\al f\D_\al f|^{p/2}\,d\mu\Big]^{1/p}=\Big[\int_{\H^d}|\nabla f|^{p}\,d\mu\Big]^{1/p}.
\end{equation}
We record also the Sobolev embedding theorem
\begin{equation*}
W^{p,s}\hookrightarrow L^q\text{ if }1<p\leq q<\infty\text{ and }s=d/p-d/q.
\end{equation*}

\section{Strichartz estimates on hyperbolic spaces}\label{Strichartz}

In this section we prove our main Strichartz estimates. For any $\psi\in H^s(\H^d)$, $s\in\R$, let $W(t)\psi\in C(\R:H^s(\H^d))$ denote the solution of the free Schr\"{o}dinger evolution with data $\psi$, i.e.
\begin{equation*}
\widetilde{W(t)\psi}(\lambda,\omega)=\widetilde{\psi}(\lambda,\omega)\cdot e^{-it(\lambda^2+\rho^2)}.
\end{equation*}

\begin{proposition}\label{StricEst} Assume $q\in(2,(2d+4)/d]$, $I=(a,b)\subseteq \R$ is a bounded open interval, $r=2dq/(dq-4)$, and $S^0_q(I)$ is defined as in \eqref{keyspace1}.
 
(i) If $\phi\in L^2(\H^d)$ then
\begin{equation}\label{Strichartz1}
\|W(t)\phi\|_{S^0_q(I)}\leq C_q\|\phi\|_{L^2},
\end{equation}
for some constant $C_{q}$ that does not depend on the interval $I$.

(ii) If $F\in L^{p_1,p_2}_I$ for some $(p_1,p_2)\in\{(1,2),(q',r'),(q',q')\}$ then
\begin{equation}\label{Strichartz2}
\Big|\Big|\int_{a}^tW(t-s)F(s)\,ds\Big|\Big|_{S^0_q(I)}\leq C_q||F||_{L^{p_1,p_2}_I},
\end{equation}
for some constant $C_q$ that does not depend on the interval $I$.
\end{proposition}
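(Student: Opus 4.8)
The plan is to reduce everything to two dispersive estimates for the free propagator $W(t)$ acting on radial (i.e. $\mathbb{K}$-invariant) kernels, and then to run the standard $TT^*$ and Christ--Kiselev machinery. First I would write $W(t)\phi = \phi \ast k_t$, where $k_t$ is the $\mathbb{K}$-invariant kernel with spherical Fourier transform $\widetilde{k_t}(\lambda) = e^{-it(\lambda^2+\rho^2)}$; by \eqref{keyiden} convolution with $k_t$ corresponds to the multiplier $e^{-it(\lambda^2+\rho^2)}$, so the $L^2 \to L^2$ bound is immediate from Plancherel. The real content is the $L^1 \to L^\infty$ type dispersive decay for $|t| \leq 1$ and the \emph{global} (in time) pointwise control for $|t| \geq 1$. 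For the short-time regime one expects the Euclidean-type bound $\|k_t\|_{L^\infty(\H^d)} \lesssim |t|^{-d/2}$, proved by inserting the Fourier inversion formula \eqref{Finvrad} together with the expression \eqref{reduced} for $\Phi_\lambda$, and doing stationary phase in $\lambda$ exactly as in $\R^d$ (the factor $|\mathbf{c}(\lambda)|^{-2} \sim \lambda^{d-1}$ for large $\lambda$ reproduces the Euclidean measure); this gives, via the $L^1 \to L^\infty$ and $L^2 \to L^2$ endpoints and interpolation plus $TT^*$, the Euclidean-type Strichartz bounds on intervals of length $\leq 1$, hence the $L^{\infty,2}_I \cap L^{q,r}_I$ piece of $S^0_q(I)$ on \emph{any} $I$ after summing over unit subintervals (using that $(q,r)$ is admissible and nonendpoint, so Christ--Kiselev applies for the inhomogeneous estimate).

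The genuinely noneuclidean ingredient is the $L^{q,q}_I$ bound, for which I would use the Kunze--Stein inequality \eqref{special}. The strategy is: for $|t| \geq 1$ one has pointwise decay of the kernel of the form $\|k_t\|$ appearing through \eqref{special} that is summable in $t$. Concretely, I would estimate the $L^2(\H^d) \to L^2(\H^d)$ operator norm of $W(t)$ composed with a fixed spatial weight, or rather run the $TT^*$ argument directly: the $L^{q,q}_I$ estimate \eqref{Strichartz1} is equivalent by duality and $TT^*$ to the bilinear bound $\big\| \int_I W(t-s) F(s)\, ds \big\|_{L^{q,q}_I} \lesssim \|F\|_{L^{q',q'}_I}$, and the kernel of $W(t-s)$ is $\mathbb{K}$-invariant, so by \eqref{special} applied in the $x$-variable,
\[
\|W(t-s)F(s)\|_{L^2(\H^d)} \leq C \|F(s)\|_{L^2(\H^d)} \cdot \int_0^\infty |k_{t-s}(r)| e^{-\rho r}(r+1)(\sh r)^{2\rho}\, dr.
\]
One then needs that the radial kernel $k_\tau(r)$ satisfies bounds making the $r$-integral of $|k_\tau(r)| e^{-\rho r}(r+1)(\sh r)^{2\rho}$ behave like $\langle \tau\rangle^{-d/2}$ uniformly — here the exponential gain $e^{-\rho r}(\sh r)^{2\rho} \sim e^{-2\rho r} e^{2\rho r} = 1$ near infinity is exactly what compensates the non-integrability that would otherwise occur, and for large $\tau$ stationary phase in the inversion formula \eqref{Finvrad} together with the known pointwise bounds on $\Phi_\lambda$ and derivatives yields the decay. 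Interpolating the $L^2 \to L^2$ gain against the trivial bound and feeding it into a Hardy--Littlewood--Sobolev / Schur-test argument in the $t$-variable gives the $L^{q,q}$ Strichartz estimate for $q > 2$ (the constraint $q \le (2d+4)/d$ and $q \leq r$ is exactly the admissibility range that makes the exponents fit).

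For part (ii), the inhomogeneous estimates, I would first obtain the "diagonal" cases by the same $TT^*$ reasoning: $(p_1,p_2) = (q',r')$ and $(q',q')$ follow from \eqref{Strichartz1} and its dual by composing, and the $(1,2)$ case follows from Minkowski's inequality combined with \eqref{Strichartz1} (putting $W(t)$ in $L^\infty_t L^2_x$ and integrating the $L^1_t L^2_x$ datum). The retarded truncation $\int_a^t$ versus $\int_a^b$ is handled by the Christ--Kiselev lemma, which applies precisely because all the exponents in $S^0_q(I)$ are strictly greater than the dual exponents of the source spaces (no endpoint $q=2$ appears), and the constants are interval-independent because each underlying estimate is. The main obstacle I anticipate is the large-time analysis of the radial kernel $k_\tau(r)$: one must show that the oscillatory integral $\int_0^\infty e^{-i\tau(\lambda^2+\rho^2)} \Phi_\lambda(r) |\mathbf{c}(\lambda)|^{-2}\, d\lambda$ decays like $\tau^{-d/2}$ \emph{after} being weighted by $e^{-\rho r}(r+1)(\sh r)^{2\rho}$ and integrated in $r$, uniformly in $\tau \geq 1$, and that this decay is integrable against the $t$-sum — controlling the contribution of small $\lambda$ (where $|\mathbf{c}(\lambda)|^{-2}$ degenerates) and of the transition region $r \sim 1$ requires care, though it is by now a fairly standard piece of noneuclidean harmonic analysis.
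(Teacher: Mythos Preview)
Your proposal has a genuine gap in the large-time analysis, and it is precisely where you try to extract temporal decay from the Kunze--Stein inequality \eqref{special} at the $L^2\to L^2$ level. The operator $W(\tau)$ is unitary on $L^2(\H^d)$, so \emph{no} $L^2\to L^2$ bound can give you decay in $\tau$: the integral $\int_0^\infty |k_\tau(r)|\,e^{-\rho r}(r+1)(\sh r)^{2\rho}\,dr$ that you write down is bounded below by $1$ for every $\tau$. (There is also an arithmetic slip: $e^{-\rho r}(\sh r)^{2\rho}\sim 2^{-2\rho}e^{\rho r}$ for large $r$, not $\sim 1$.) A related issue is the claim that the $L^{q,r}_I$ estimate on arbitrary $I$ follows by ``summing over unit subintervals'': this does not work for $L^q_t$ norms with $q<\infty$, since the local bounds on the subintervals do not decay and the sum diverges.

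What the paper actually does is to first prove a \emph{pointwise} bound on the kernel, $|k_t(r)|\leq C(|t|^{-d/2}+|t|^{-1})\,e^{-\rho r}(1+r)^{\rho+1}$ (Lemma \ref{KerBound}), and then use it to derive genuine off-diagonal dispersive estimates $\|W(t)\phi\|_{L^{p_2}}\leq C_q B(t)\|\phi\|_{L^{p_1}}$ for $p_1\in\{q',r'\}$, $p_2\in\{q,r\}$, with $B(t)=|t|^{-2/q}$ for $|t|\leq 1$ and $B(t)=|t|^{-1}$ for $|t|\geq 1$ (Lemma \ref{coro}). The Kunze--Stein inequality enters only in the large-$|t|$ regime, and not on the full kernel: one decomposes $k_t=\sum_{n\geq 0}K_{t,n}$ into annular pieces $K_{t,n}=\mathbf{1}_{[n,n+1]}(r)k_t$, uses \eqref{special} to get a polynomially-growing $L^2\to L^2$ bound on each piece, the pointwise bound to get exponentially-decaying $L^1\to L^\infty$ and $L^{q'}\to L^\infty$ bounds, interpolates, and sums in $n$. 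With these dispersive estimates in hand, a single $TT^\ast$ argument plus Hardy--Littlewood--Sobolev in $t$ (since $B(t)\leq C|t|^{-2/q}$) gives all of \eqref{Strichartz1} and \eqref{Strichartz2} at once; no Christ--Kiselev lemma or splitting into short/long time is needed.
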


These Strichartz estimates have been proved in the case of radial functions in \cite{BaCaSt} (see also \cite{Ba} and \cite{Pi} for weighted Strichartz estimates for radial functions). The main ingredient in the proof of Proposition \ref{StricEst} is the following pointwise bound:

\begin{lemma}\label{KerBound}
For any $\ep>0$, $r\in[0,\infty)$, and $t\in\R$, we have
\begin{equation}\label{Ke1}
\Big|\int_{\R}e^{-(it+\ep^2)\lambda^2}\Phi_\lambda(r)|\c(\lambda)|^{-2}\,d\lambda\Big|\leq C(|t|^{-d/2}+|t|^{-1})e^{-\rho r}(1+r)^{\rho+1}.
\end{equation}
\end{lemma}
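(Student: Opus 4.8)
The plan is to estimate the radial kernel $G_{t,\ep}(r) = \int_\R e^{-(it+\ep^2)\lambda^2}\Phi_\lambda(r)|\c(\lambda)|^{-2}\,d\lambda$ by splitting the $\lambda$-integral according to $|\lambda| \le 1$ and $|\lambda| \ge 1$, since the Harish-Chandra $\c$-function behaves very differently in these two regimes. From $\c(\lambda) = C\Gamma(i\lambda)/\Gamma(\rho+i\lambda)$ we have $|\c(\lambda)|^{-2} \sim |\lambda|^2$ for $|\lambda|\le 1$ and $|\c(\lambda)|^{-2} \sim |\lambda|^{d-1}$ for $|\lambda|\ge 1$ (with symbol-type bounds on all derivatives, and analyticity properties). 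For the low-frequency piece $|\lambda|\le 1$, I would bound $|\Phi_\lambda(r)| \le \Phi_0(r) \le Ce^{-\rho r}(1+r)$ (the bound recorded after \eqref{reduced}) and estimate the oscillatory integral $\int_{|\lambda|\le 1} e^{-it\lambda^2}|\lambda|^2\,d\lambda$ by stationary phase / van der Corput, which gives a gain of $|t|^{-1}$ after integrating by parts using the $|\lambda|^2$ weight to kill the endpoint terms (or directly $|t|^{-3/2}$, but $|t|^{-1}$ suffices and is uniform). This contributes the $|t|^{-1}e^{-\rho r}(1+r)$ term.

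For the high-frequency piece $|\lambda|\ge 1$ one needs a sharper pointwise description of $\Phi_\lambda(r)$. The standard Harish-Chandra expansion gives, for $\lambda r \gtrsim 1$,
\begin{equation*}
\Phi_\lambda(r) = \c(\lambda)\, e^{(i\lambda-\rho)r}(1+O((\lambda r)^{-1})) + \c(-\lambda)\, e^{(-i\lambda-\rho)r}(1+O((\lambda r)^{-1})),
\end{equation*}
so that $\Phi_\lambda(r)|\c(\lambda)|^{-2}$ becomes, modulo lower-order terms, $\overline{\c(\lambda)}^{-1}e^{(i\lambda-\rho)r} + \overline{\c(-\lambda)}^{-1}e^{(-i\lambda-\rho)r}$, with $|\c(\lambda)|^{-1}\sim|\lambda|^\rho$. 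Plugging this into the high-frequency integral produces, after extracting $e^{-\rho r}$, oscillatory integrals of the form $\int_{|\lambda|\ge 1} e^{-it\lambda^2 \pm i\lambda r}|\lambda|^\rho\, (\text{symbol})\, d\lambda$ (and similar with extra $(\lambda r)^{-1}$ factors from the error terms, and a region $\lambda r \lesssim 1$ handled crudely). The phase $-t\lambda^2 \pm \lambda r$ has a single nondegenerate stationary point with second derivative $\sim t$, so stationary phase gives the Euclidean-type decay $|t|^{-1/2}$ per unit frequency band; summing the $|\lambda|^\rho$ weight over dyadic bands and being careful near $\lambda \sim 1$ yields the $|t|^{-d/2}$ factor (here $d = 2\rho+1$, so $\rho$ frequency powers plus the $|t|^{-1/2}$ from the Hessian plus the band-width bookkeeping combine correctly). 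The polynomial loss $(1+r)^{\rho+1}$ absorbs the at-worst-polynomial growth coming from the $r$-dependence of the stationary point, the error terms $(\lambda r)^{-1}$, and the transition region; since the statement only claims an upper bound with a generous power of $(1+r)$, I would not try to be sharp here.

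The main obstacle is making the high-frequency stationary-phase analysis clean and uniform in $r\in[0,\infty)$ and in $\ep>0$: one must track how the $O((\lambda r)^{-1})$ remainders in the Harish-Chandra expansion interact with the oscillatory integration, control the regime where $\lambda r$ is of order one (where the asymptotic expansion is not yet valid and one falls back on $|\Phi_\lambda(r)|\le \Phi_0(r)$ type bounds), and ensure the damping factor $e^{-\ep^2\lambda^2}$ only helps (it makes every integral absolutely convergent and can be discarded in the final bound by $\ep\to 0$ monotonicity, or kept as a harmless Schwartz cutoff). A convenient technical route is to write $|\c(\lambda)|^{-2} = |\lambda|^{d-1} b(\lambda)$ with $b$ a symbol of order $0$, smoothly interpolate between the two regimes with a partition of unity in $\lambda$, and in each piece integrate by parts in $\lambda$ exploiting $\partial_\lambda(-t\lambda^2\pm\lambda r) = -2t\lambda \pm r$ away from the stationary point while applying van der Corput near it; the $(1+r)^{\rho+1}$ factor is then produced mechanically by counting the worst powers of $r$ that appear. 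Once \eqref{Ke1} is established, Proposition \ref{StricEst} follows by the usual $TT^*$ argument combined with the Kunze--Stein inequality \eqref{special}.
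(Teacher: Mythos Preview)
Your high-frequency treatment via the Harish-Chandra expansion is essentially the paper's approach for large $r$ (the representation \eqref{Ke4}), and the stationary-phase bookkeeping you sketch there is correct. The difficulty is with the other regimes.

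The low-frequency argument as written does not work: you cannot bound $|\Phi_\lambda(r)|\le\Phi_0(r)$ and then \emph{separately} estimate $\int_{|\lambda|\le 1}e^{-it\lambda^2}|\lambda|^2\,d\lambda$, because $\Phi_\lambda(r)$ sits inside the $\lambda$-integral and depends on $\lambda$. To recover any $t$-decay you must either integrate by parts against the full amplitude $\Phi_\lambda(r)|\c(\lambda)|^{-2}$---which requires $\partial_\lambda$-bounds on $\Phi_\lambda$, and since $|\partial_\lambda\Phi_\lambda(r)|\lesssim r\,\Phi_0(r)$ this costs a factor of $r$, yielding $(1+r)^2$ rather than $(1+r)$ and already missing the stated exponent when $d=2$---or else exploit the oscillatory structure of $\Phi_\lambda$ itself. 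The same objection applies to your ``handled crudely'' region $|\lambda|\ge 1$, $\lambda r\lesssim 1$: the sup bound $|\Phi_\lambda(r)|\le\Phi_0(r)$ alone gives no $t$-decay there, and a correct treatment again needs symbol control of $\Phi_\lambda(r)$ in $\lambda$, not just a pointwise bound.

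The paper avoids these issues by organizing the split on $r$ rather than on $\lambda$. For $r\ge 1/10$ it uses the representation $\Phi_\lambda(r)=e^{-\rho r}\big[e^{i\lambda r}\c(\lambda)m_1(\lambda,r)+e^{-i\lambda r}\c(-\lambda)m_1(-\lambda,r)\big]$, valid for \emph{all} $\lambda\in\R$ (not only $\lambda r\gtrsim 1$), with uniform symbol bounds \eqref{Ke5} on $m_1$; for $r\le 1$ it uses a companion all-$\lambda$ representation \eqref{Ke6} with $|\partial_\lambda^\alpha m_2|\le C(1+r|\lambda|)^{-\rho}(1+|\lambda|)^{-\alpha}$. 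In both cases the $r$-oscillation of $\Phi_\lambda$ is absorbed into the phase $-t\lambda^2\pm r\lambda$, so the problem becomes a one-variable oscillatory integral with a nice amplitude, handled by a dyadic decomposition in $\lambda$ keyed to the stationary point $\lambda\sim r/t$ and two elementary estimates \eqref{Ke29}--\eqref{Ke30}. That organization is what makes the low-frequency and transition regimes come out cleanly with the stated power of $(1+r)$.
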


\begin{proof}[Proof of Lemma \ref{KerBound}] The bound \eqref{Ke1} can be improved when $|t|\geq 1$ to give $|t|^{-3/2}$ dispersive decay (as observed in \cite{Ba}). For simplicity, we only prove the weaker dispersive decay stated in \eqref{Ke1}, which still suffices for Proposition \ref{StricEst}.

The notation is described in section \ref{preliminaries}: $\rho=(d-1)/2$,
\begin{equation*}
\c(\lambda)=C\frac{\Gamma(i\lambda)}{\Gamma(\rho+i\lambda)},
\end{equation*}
is the Harish-Chandra $\c$-function, and
\begin{equation*}
\Phi_\lambda (r)=C\int_{0}^\pi(\ch r-\sh r\cos\theta)^{-i\lambda-\rho}(\sin\theta)^{2\rho-1}\,d\theta
\end{equation*}
is the elementary spherical function.

It follows easily from the definition (see also \cite[Proposition A1]{Io}) that $|\c(\lambda)|^{-2}=\c(\lambda)^{-1}\c(-\lambda)^{-1}$ on $\R$, and we have the symbol-type bounds
\begin{equation}\label{Ke3}
\Big|\frac{\partial^\alpha}{\partial\lambda^\alpha}(\lambda^{-1}\c(\lambda)^{-1})\Big|\leq C(1+|\lambda|)^{\rho-1-\alpha}\text{ for any }\alpha\in[0,d+2]\cap\mathbb{Z}\text{ and }\lambda\in\R.
\end{equation}
Also, for $r\geq 1/10$, $\Phi_\lambda(r)$ can be written in the form
\begin{equation}\label{Ke4}
\Phi_\lambda(r)=e^{-\rho r}[e^{i\lambda r}\c(\lambda)m_1(\lambda,r)+e^{-i\lambda r}\c(-\lambda)m_1(-\lambda,r)],
\end{equation}
where
\begin{equation}\label{Ke5}
\Big|\frac{\partial^\alpha}{\partial\lambda^\alpha}m_1(\lambda,r)\Big|\leq C(1+|\lambda|)^{-\alpha}\text{ for any }\alpha\in[0,d+2]\cap\mathbb{Z}\text{ and }\lambda\in\R.
\end{equation}
Finally, for $r\leq 1$,  $\Phi_\lambda(r)$ can be written in the form
\begin{equation}\label{Ke6}
\Phi_\lambda(r)=e^{i\lambda r}m_2(\lambda,r)+e^{-i\lambda r}m_2(-\lambda,r),
\end{equation}
where
\begin{equation}\label{Ke7}
\Big|\frac{\partial^\alpha}{\partial\lambda^\alpha}m_2(\lambda,r)\Big|\leq C(1+r|\lambda|)^{-\rho}(1+|\lambda|)^{-\alpha}\text{ for any }\alpha\in[0,d+2]\cap\mathbb{Z}\text{ and }\lambda\in\R.
\end{equation}
The representations \eqref{Ke4} and \eqref{Ke6} and the bounds \eqref{Ke5} and \eqref{Ke7} follow from \cite{StTo} (or \cite[Proposition A2]{Io}).

The bound \eqref{Ke1} follows now from standard estimates for oscillatory integrals. For suitable choices of $A$, $B$, $m$, and $m'$, we will use the following simple bounds: if $A,B\in\mathbb{R}$, $m\in C^1(\R)$ is a compactly supported function, and $m':\R\to\mathbb{C}$ is supported in the $[-2,-1/2]\cup[1/2,2]$ and satisfies the bounds $|\partial^\alpha_\lambda m'(\lambda)|\leq 1$ for all $\alpha\in[0,d+2]\cap\mathbb{Z}$, then
\begin{equation}\label{Ke29}
\Big|\int_\R e^{i(A\lambda^2+B\lambda)}m(\lambda)\,d\lambda\Big|\leq C|A|^{-1/2}\int_{\R}|\partial_\lambda m(\lambda)|\,d\lambda,
\end{equation}
and
\begin{equation}\label{Ke30}
\Big|\int_\R e^{i(A\lambda^2+B\lambda)}m'(\lambda)\,d\lambda\Big|\leq 
\begin{cases}
C(1+|A|)^{-1/2}&\text{ if }|A/B|\in[1/10,10];\\
C(1+|A|+|B|)^{-d}&\text{ if }|A/B|\notin[1/10,10].
\end{cases}
\end{equation}
To prove \eqref{Ke29} we may assume, after a linear change of variables, that $A=\pm 1$ and $B=0$. Then we decompose the integral into two parts, corresponding to $|\lambda|\leq 1$ and $|\lambda|\geq 1$ and integrate by parts in the second integral. To prove \eqref{Ke30} we use \eqref{Ke29} in the case $|A/B|\in[1/10,10]$, and integrate by parts $d$ times in the case $|A/B|\notin[1/10,10]$. 

We fix a smooth even function $\eta_0:\R\to[0,1]$ supported in $[-2,-1/2]\cup[1/2,2]$ with the property that
\begin{equation}\label{Ke15}
\sum_{j\in\mathbb{Z}}\eta_0(\lambda/2^j)=1\text{ for any }\lambda\in\mathbb{R}\setminus\{0\}.
\end{equation}
for any $j\in\mathbb{Z}$ let $\eta_j(\lambda)=\eta_0(\lambda/2^j)$ and $\eta_{\leq j}=\sum_{j'\leq j}\eta_{j'}$.

We prove now the bound \eqref{Ke1}. Assume first that $r\geq 1/2$. Using \eqref{Ke4} it suffices to prove that
\begin{equation}\label{Ke8}
\Big|\int_{\R}e^{-i(t\lambda^2-r\lambda)}e^{-\ep^2\lambda^2}m_1(\lambda,r)\c(-\lambda)^{-1}\,d\lambda\Big|\leq C(|t|^{-d/2}+|t|^{-1})(1+r)^{\rho+1}.
\end{equation}
Let $J$ denote the smallest integer with the property that $2^J\geq 2^{10}|r/t|$. Assume first that $|t|\leq 2r$, thus $J\geq 0$. Using \eqref{Ke29}, \eqref{Ke3}, and \eqref{Ke5},
\begin{equation*}
\Big|\int_{\R}e^{-i(t\lambda^2-r\lambda)}\eta_{\leq J}(\lambda)e^{-\ep^2\lambda^2}m_1(\lambda,r)\c(-\lambda)^{-1}\,d\lambda\Big|\leq C|t|^{-1/2}2^{\rho J}\leq C|t|^{-d/2}r^\rho.
\end{equation*}
Using the second bound in \eqref{Ke30}, \eqref{Ke3}, and \eqref{Ke5}, for any $j\geq J+1$
\begin{equation*}
\Big|\int_{\R}e^{-i(t\lambda^2-r\lambda)}\eta_{j}(\lambda)e^{-\ep^2\lambda^2}m_1(\lambda,r)\c(-\lambda)^{-1}\,d\lambda\Big|\leq C2^{(\rho+1)j}(2^{2j}|t|)^{-d}\leq C|t|^{-d}2^{-j(2d-\rho-1)}. 
\end{equation*}
We sum the last two bounds to prove \eqref{Ke8} in the case $|t|\leq 2r$.

Assume now that $|t|\geq 2r$, thus $t\geq 1$ and $J\leq 10$. Using again \eqref{Ke29}, \eqref{Ke3} (notice that $|\c(-\lambda)^{-1}|\leq C|\lambda|$ for $|\lambda|\leq C$), and \eqref{Ke5},
\begin{equation*}
\Big|\int_{\R}e^{-i(t\lambda^2-r\lambda)}\eta_{\leq J}(\lambda)e^{-\ep^2\lambda^2}m_1(\lambda,r)\c(-\lambda)^{-1}\,d\lambda\Big|\leq C|t|^{-1/2}2^{J}\leq C|t|^{-3/2}r.
\end{equation*}
 Using the second bound in \eqref{Ke30}, \eqref{Ke3}, and \eqref{Ke5}, for any $j$ with $J+1\leq j\leq 0$
\begin{equation*}
\Big|\int_{\R}e^{-i(t\lambda^2-r\lambda)}\eta_{j}(\lambda)e^{-\ep^2\lambda^2}m_1(\lambda,r)\c(-\lambda)^{-1}\,d\lambda\Big|\leq C2^{2j}(1+2^{2j}|t|)^{-d}. 
\end{equation*}
Finally, for $j\geq 0$, using the second bound in \eqref{Ke30}, \eqref{Ke3}, and \eqref{Ke5},
\begin{equation*}
\Big|\int_{\R}e^{-i(t\lambda^2-r\lambda)}\eta_{j}(\lambda)e^{-\ep^2\lambda^2}m_1(\lambda,r)\c(-\lambda)^{-1}\,d\lambda\Big|\leq C2^{(\rho+1)j}(2^{2j}t)^{-d}. 
\end{equation*}
We sum the last three bounds to prove \eqref{Ke8} in the case $|t|\geq 2r$.

Assume now that $r\leq 1$. Using \eqref{Ke5} it suffices to prove that
\begin{equation}\label{Ke10}
\Big|\int_{\R}e^{-i(t\lambda^2-r\lambda)}e^{-\ep^2\lambda^2}m_2(\lambda,r)\c(\lambda)^{-1}\c(-\lambda)^{-1}\,d\lambda\Big|\leq C(|t|^{-d/2}+|t|^{-1}).
\end{equation}
As  before, let $J$ denote the smallest integer with the property that $2^J\geq 2^{10}|r/t|$. Assume first that $|t|\leq r^2$, thus $|t|\leq 1$ and $2^J\geq r^{-1}$. Using \eqref{Ke29}, \eqref{Ke3}, and \eqref{Ke7},
\begin{equation*}
\Big|\int_{\R}e^{-i(t\lambda^2-r\lambda)}\eta_{\leq J}(\lambda)e^{-\ep^2\lambda^2}m_2(\lambda,r)\c(\lambda)^{-1}\c(-\lambda)^{-1}\,d\lambda\Big|\leq C|t|^{-1/2}2^{2\rho J}(r2^J)^{-\rho}\leq C|t|^{-d/2}.
\end{equation*}
Using the second bound in \eqref{Ke30}, \eqref{Ke3}, and \eqref{Ke7}, for $j\geq J+1$
\begin{equation*}
\begin{split}
\Big|\int_{\R}e^{-i(t\lambda^2-r\lambda)}\eta_{j}(\lambda)e^{-\ep^2\lambda^2}m_2(\lambda,r)\c(\lambda)^{-1}\c(-\lambda)^{-1}\,d\lambda\Big|&\leq C2^j2^{2\rho j}(r2^j)^{-\rho}(1+2^{2j}|t|)^{-d}\\
&\leq Cr^{-\rho}|t|^{-d}2^{-j(2d-\rho-1)}.
\end{split}
\end{equation*}
We sum the last two bounds to prove \eqref{Ke10} in the case $|t|\leq r^2$.

Assume now that $r^2\leq |t|\leq 2r$, thus $|t|\leq 1$ and $J\geq 0$. Using \eqref{Ke29}, \eqref{Ke3}, and \eqref{Ke7},
\begin{equation*}
\Big|\int_{\R}e^{-i(t\lambda^2-r\lambda)}\eta_{\leq J}(\lambda)e^{-\ep^2\lambda^2}m_2(\lambda,r)\c(\lambda)^{-1}\c(-\lambda)^{-1}\,d\lambda\Big|\leq C|t|^{-1/2}2^{2\rho J}\leq C|t|^{-d/2}.
\end{equation*}
Using the second bound in \eqref{Ke30}, \eqref{Ke3}, and \eqref{Ke7}, for $j\geq J+1$
\begin{equation*}
\begin{split}
\Big|\int_{\R}e^{-i(t\lambda^2-r\lambda)}\eta_{j}(\lambda)e^{-\ep^2\lambda^2}m_2(\lambda,r)\c(\lambda)^{-1}\c(-\lambda)^{-1}\,d\lambda\Big|&\leq C2^j2^{2\rho j}(1+2^{2j}|t|)^{-d}.
\end{split}
\end{equation*}
We sum the last two bounds to prove \eqref{Ke10} in the case $r^2\leq |t|\leq 2r$.

Finally, assume that $|t|\geq 2r$, thus $J\leq 10$. Using \eqref{Ke29}, \eqref{Ke3} (i.e. $|\c(\lambda)^{-1}\c(-\lambda)^{-1}|\leq C\lambda^2$ if $|\lambda|\leq C$), and \eqref{Ke7},
\begin{equation*}
\Big|\int_{\R}e^{-i(t\lambda^2-r\lambda)}\eta_{\leq J}(\lambda)e^{-\ep^2\lambda^2}m_2(\lambda,r)\c(\lambda)^{-1}\c(-\lambda)^{-1}\,d\lambda\Big|\leq C|t|^{-1/2}2^{2J}\leq C|t|^{-1}.
\end{equation*}
Using the second bound in \eqref{Ke30}, \eqref{Ke3}, and \eqref{Ke7}, for $J+1\leq j\leq 0$
\begin{equation*}
\begin{split}
\Big|\int_{\R}e^{-i(t\lambda^2-r\lambda)}\eta_{j}(\lambda)e^{-\ep^2\lambda^2}m_2(\lambda,r)\c(\lambda)^{-1}\c(-\lambda)^{-1}\,d\lambda\Big|&\leq C2^j2^{2j}(1+2^{2j}|t|)^{-d}.
\end{split}
\end{equation*}
Using the second bound in \eqref{Ke30}, \eqref{Ke3}, and \eqref{Ke7}, for $j\geq 0$
\begin{equation*}
\begin{split}
\Big|\int_{\R}e^{-i(t\lambda^2-r\lambda)}\eta_{j}(\lambda)e^{-\ep^2\lambda^2}m_2(\lambda,r)\c(\lambda)^{-1}\c(-\lambda)^{-1}\,d\lambda\Big|&\leq C2^j2^{2\rho j}(1+2^{2j}|t|)^{-d}.
\end{split}
\end{equation*}
We sum the last three bounds to prove \eqref{Ke10} in the case $|t|\geq 2r$. This completes the proof of the lemma.
\end{proof}

We have the following consequence of Lemma \ref{KerBound}.

\begin{lemma}\label{coro}
Assume $q$ is as in Proposition \ref{StricEst}, $t\in\R$, $p_1\in\{q',r'\}$, $p_2\in\{q,r\}$, and $\phi\in L^{p_1}(\H^d)$. Then
\begin{equation}\label{coro1}
||W(t)\phi||_{L^{p_2}(\H^d)}\leq C_qB(t)||\phi||_{L^{p_1}(\H^d)},
\end{equation}
where 
\begin{equation}\label{coro2}
B(t)=
\begin{cases}
|t|^{-2/q}&\text{ if }|t|\leq 1;\\
|t|^{-1}&\text{ if }|t|\geq 1.
\end{cases}
\end{equation}
\end{lemma}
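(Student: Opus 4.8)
The plan is to derive the fixed-time $L^{p_1}\to L^{p_2}$ bounds from the two cases $p_1=p_2=2$ (unitarity of $W(t)$ on $L^2$) and $p_1=q'$ (or $r'$), $p_2=q$ (or $r$), and then fill in the remaining pairs by interpolation. The main point is the dispersive $L^1\to L^\infty$-type estimate, which will come directly from Lemma \ref{KerBound} combined with the Kunze--Stein inequality \eqref{special}.

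First I would record the kernel of the approximating operator. For $\ep>0$ let $W_\ep(t)$ be the operator with radial convolution kernel
\begin{equation*}
K_{t,\ep}(r)=\int_{\R}e^{-(it+\ep^2)\lambda^2}\Phi_\lambda(r)|\c(\lambda)|^{-2}\,d\lambda,
\end{equation*}
so that $W_\ep(t)\phi=e^{-it\rho^2}(\phi\ast K_{t,\ep})$ up to the harmless scalar factor $e^{-it\rho^2}$ coming from the $(\lambda^2+\rho^2)$ in the symbol; as $\ep\to 0$, $W_\ep(t)\phi\to W(t)\phi$ for nice $\phi$. By Lemma \ref{KerBound}, $|K_{t,\ep}(r)|\leq C(|t|^{-d/2}+|t|^{-1})e^{-\rho r}(1+r)^{\rho+1}$ uniformly in $\ep$. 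Now I apply the Kunze--Stein bound \eqref{special}: for any $p_2\in\{q,r\}$ and its dual $p_1$, I would interpolate between $L^2\to L^2$ (for which $\|\phi\ast K\|_{L^2}\leq \|\phi\|_{L^2}\cdot\int_0^\infty|K(r)|e^{-\rho r}(r+1)(\sh r)^{2\rho}\,dr$) and the trivial $L^1\to L^\infty$ bound $\|\phi\ast K\|_{L^\infty}\leq \|\phi\|_{L^1}\cdot\sup_r|K(r)|$ — but actually the cleaner route is to use \eqref{special} together with interpolation against $L^1\to L^1$, or more directly to invoke the Kunze--Stein phenomenon in the form: convolution with a radial kernel $K$ maps $L^{p_1}\to L^{p_2}$ (with $p_1\le 2\le p_2$ suitable) with norm bounded by $\big[\int_0^\infty |K(r)|^{?}(\ldots)\big]^{?}$. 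Concretely, I would bound
\begin{equation*}
\|W_\ep(t)\phi\|_{L^{p_2}}\leq C\|\phi\|_{L^{p_1}}\Big(\int_0^\infty |K_{t,\ep}(r)|^{\theta}\,\Phi_0(r)^{2-\theta}\,(\sh r)^{2\rho}\,dr\Big)
\end{equation*}
for the appropriate $\theta\in(0,1)$ dictated by $1/p_2=\theta/\infty + (1-\theta)/2$, i.e. $\theta=1-2/p_2=2/q\cdot(\text{something})$; plugging in the pointwise bound from Lemma \ref{KerBound} and using $\Phi_0(r)\le Ce^{-\rho r}(1+r)$, the radial integral becomes $\int_0^\infty (|t|^{-d/2}+|t|^{-1})^{\theta} e^{-2\rho r}(1+r)^{C}\,dr = C_q(|t|^{-d/2}+|t|^{-1})^{\theta}$, since $e^{-2\rho r}(1+r)^C$ is integrable against $(\sh r)^{2\rho}\sim e^{2\rho r}$ — wait, one must be careful: $(\sh r)^{2\rho}\sim e^{2\rho r}$ for large $r$, so the exponential gains exactly cancel and one is left with $\int_0^\infty (1+r)^C e^{-\,\delta r}\,dr$ only if there is a surplus exponential decay; that surplus is precisely what the $\Phi_0(r)^{2-\theta}$ factor supplies, giving total weight $e^{-(2-\theta)\rho r}$ against $e^{2\rho r}$, i.e. net $e^{-(1-\theta)2\rho r}$, which is integrable since $\theta<1$. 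Thus $\|W_\ep(t)\phi\|_{L^{p_2}}\le C_q(|t|^{-d/2}+|t|^{-1})^{\theta}\|\phi\|_{L^{p_1}}$, and letting $\ep\to 0$ gives the same bound for $W(t)$.

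It remains to check that $\theta$ produces the exponent $B(t)$ in \eqref{coro2}. For $p_2=q$ one has $1-2/q = 2/q\cdot(q/2-1)$... more simply, the admissibility $2/q = d(1/2-1/r)$ together with $r=2dq/(dq-4)$ should make $\theta\cdot d/2 = 2/q$ for $p_2=q$, and similarly for $p_2=r$ after interpolating; and $\theta\cdot 1 = $ the power of $|t|^{-1}$ for $|t|\ge 1$ — here I would just note that for $|t|\ge 1$ the dominant term is $|t|^{-1}$ so the bound reads $|t|^{-\theta}$, and I need $\theta=1$; this is not literally true, so in the regime $|t|\ge 1$ one must instead use the \emph{unimproved} $|t|^{-1}$ factor from Lemma \ref{KerBound} but without raising it to the power $\theta$ — i.e. for $|t|\ge 1$ one should interpolate against $L^2\to L^2$ which is bounded by the full radial integral $\le C|t|^{-1}$ directly (no power loss), giving $\|W(t)\phi\|_{L^2}\le C|t|^{-1}\|\phi\|_{L^2}$ — no wait, that is false, $W(t)$ is unitary on $L^2$. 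The correct statement is: for $|t|\ge1$, apply the $L^{p_1}\to L^{p_2}$ interpolation with the pointwise kernel bound $|K_{t,\ep}(r)|\le C|t|^{-1}e^{-\rho r}(1+r)^{\rho+1}$ and one genuinely gets $|t|^{-\theta}$ where $\theta = 1-2/p_2$; since $1-2/q < 1$ this is \emph{weaker} than $|t|^{-1}$, so actually $B(t)=|t|^{-1}$ as stated must come from the \emph{improved} $|t|^{-3/2}$ decay mentioned in the proof of Lemma \ref{KerBound} together with the endpoint. So the honest plan is: in the regime $|t|\ge 1$ use the improved $|t|^{-3/2}$ dispersive bound (which the authors note holds, though they only proved $|t|^{-1}$); then $\theta\cdot 3/2 \ge 1$ for all admissible $q$ in the stated range, yielding $|t|^{-1}$ after possibly conceding the excess. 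The main obstacle is thus the bookkeeping to see that the interpolation exponents line up to give exactly $|t|^{-2/q}$ for small $|t|$ and $|t|^{-1}$ for large $|t|$ simultaneously for all $q\in(2,(2d+4)/d]$; everything else is a routine application of \eqref{special} and Lemma \ref{KerBound}.
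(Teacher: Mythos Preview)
Your treatment of the regime $|t|\ge 1$ has a genuine gap, and the fix you propose does not work.

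You correctly diagnose the problem: interpolating unitarity on $L^2$ (operator bound $1$) against the dispersive bound $L^1\to L^\infty$ (operator bound $C|t|^{-1}$ for $|t|\ge 1$) gives only $|t|^{-\theta}$ with $\theta=1-2/p_2<1$. Your remedy is to invoke the improved decay $|t|^{-3/2}$ alluded to (but not proved) in Lemma~\ref{KerBound}. Even granting that bound, interpolation would yield $|t|^{-3\theta/2}$; for the pair $(p_1,p_2)=(q',q)$ one has $\theta=1-2/q\to 0$ as $q\to 2^+$, so $3\theta/2$ can be made arbitrarily small and the claimed $|t|^{-1}$ does not follow for $q$ near $2$. (The constant $C_q$ is allowed to blow up as $q\to 2^+$, but the exponent of $|t|$ is not.)

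The idea you are missing is a decomposition of the kernel in the radial variable: set $K_{t,n}(r)=\mathbf{1}_{[n,n+1)}(r)K_t(r)$. On each shell the Kunze--Stein inequality \eqref{special} gives an $L^2\to L^2$ bound that \emph{already carries the full $|t|^{-1}$ factor},
\[
\|\phi\ast K_{t,n}\|_{L^2}\le C\|\phi\|_{L^2}\int_n^{n+1}|K_t(r)|e^{-\rho r}(r+1)(\sh r)^{2\rho}\,dr\le C|t|^{-1}(n+1)^{\rho+2}\|\phi\|_{L^2},
\]
since on $[n,n+1]$ the weight $e^{-\rho r}$ from the kernel and the weight $e^{-\rho r}$ from \eqref{special} exactly cancel $(\sh r)^{2\rho}$, leaving only a polynomial in $n$. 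One also has, for each piece, $\|\phi\ast K_{t,n}\|_{L^\infty}\le C|t|^{-1}e^{-\rho n}(n+1)^{\rho+1}\|\phi\|_{L^1}$ and the analogous $L^{q'}\to L^\infty$ bound with decay $e^{-\rho n(q-2)/q}$ from $\|K_{t,n}\|_{L^q}$. Because all three endpoint bounds share the factor $|t|^{-1}$, Riesz--Thorin interpolation preserves it; the exponential decay in $n$ coming from the off-diagonal endpoints dominates the polynomial growth from the $L^2$ endpoint, and summing over $n\ge 0$ gives $C_q|t|^{-1}$.

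Two smaller remarks. Your displayed ``interpolated'' integral $\int_0^\infty |K|^\theta\Phi_0^{2-\theta}(\sh r)^{2\rho}\,dr$ is not a standard bound and in fact diverges: the exponential weights $e^{-\theta\rho r}\cdot e^{-(2-\theta)\rho r}\cdot(\sh r)^{2\rho}$ cancel exactly (your arithmetic producing $e^{-(1-\theta)2\rho r}$ is incorrect), leaving a polynomially growing integrand. The correct tool is ordinary Riesz--Thorin between separately established endpoint bounds. Finally, for $|t|\le 1$ you need a third endpoint to reach the off-diagonal pair $(q',r)$: the paper uses $L^{q'}\to L^\infty$ with bound $\|K_t\|_{L^q}\le C_q|t|^{-d/2}$, and the pair $(r',q)$ then follows by duality since $W(t)^\ast=W(-t)$. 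The worst small-time exponent among the four pairs is attained at $(r',r)$ and equals $d(1/2-1/r)=2/q$ by admissibility, which is where $B(t)=|t|^{-2/q}$ comes from.
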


\begin{proof}[Proof of Lemma \ref{coro}] It suffices to prove that bound \eqref{coro1} for the operator $P_\ep W(t)$, uniformly for $\ep>0$, where $P_\ep$ is the smoothing operator defined by the Fourier multiplier $\lambda\to e^{-\ep^2\lambda^2}$. Assume first that $|t|\leq 1$. Since $P_\ep W(t)$ is defined by the bounded Fourier multiplier $\lambda\to e^{-\ep^2\lambda^2}e^{-it(\lambda^2+\rho^2)}$, we have $$||P_\ep W(t)\phi||_{L^2(\H^d)}\leq C||\phi||_{L^{2}(\H^d)}.$$ In view of \eqref{Ke1}, $$||P_\ep W(t)\phi||_{L^\infty(\H^d)}\leq C|t|^{-d/2}||\phi||_{L^{1}(\H^d)}.$$
Using again \eqref{Ke1} and \eqref{car'}
\begin{equation*}
||P_\ep W(t)\phi||_{L^\infty(\H^d)}\leq \|\phi\|_{L^{q'}(\H^d)}\|K_t\|_{L^q(\H^d)}\leq C_q|t|^{-d/2}\|\phi\|_{L^{q'}(\H^d)},
\end{equation*}
where $K_t$ denotes the radial kernel of the operator $P_\ep W(t)$, i.e.
\begin{equation*}
K_t(r)=ce^{-it\rho^2}\int_{\R}e^{-(it+\ep^2)\lambda^2}\Phi_\lambda(r)|\c(\lambda)|^{-2}\,d\lambda.
\end{equation*}
The bound \eqref{coro1} follows by interpolation from the last three bounds in the case $|t|\leq 1$.

Assume now that $|t|\geq 1$. With $K_t$ as above, we define $K_{t,n}(r)=\mathbf{1}_{[n,n+1]}(r)K_t(r)$ for $n=0,1,2,\ldots$. Using \eqref{Ke1},
\begin{equation*}
||\phi\ast K_{t,n}||_{L^\infty(\H^d)}\leq C|t|^{-1}e^{-\rho n}(n+1)^{\rho+1}||\phi||_{L^1(\H^d)}.
\end{equation*}
Using \eqref{special} and \eqref{Ke1}
\begin{equation*}
||\phi\ast K_{t,n}||_{L^2(\H^d)}\leq C|t|^{-1}(n+1)^{\rho+2}||\phi||_{L^2(\H^d)}.
\end{equation*}
Finally, using \eqref{Ke1} and \eqref{car'}
\begin{equation*}
||\phi\ast K_{t,n}||_{L^\infty(\H^d)}\leq ||\phi||_{L^{q'}(\H^d)}\|K_{t,n}\|_{L^{q}(\H^d)}\leq C|t|^{-1}e^{-\rho n(q-2)/q}(n+1)^{\rho+1}||\phi||_{L^{q'}(\H^d)}.
\end{equation*}
By interpolation between the last three bounds
\begin{equation*}
||\phi\ast K_{t,n}||_{L^{p_2}(\H^d)}\leq C|t|^{-1}e^{-\rho n(q-2)/q}(n+1)^{\rho+2}||\phi||_{L^{p_1}(\H^d)},
\end{equation*}
for $p_1\in\{q',r'\}$ and $p_2\in\{q,r\}$, and the bound \eqref{coro1} follows by summing this bound over integers $n\geq 0$.
\end{proof}

\begin{proof}[Proof of Proposition \ref{StricEst}] The $L^{\infty,2}_I$ bound in \eqref{Strichartz1} follows from the uniform boundedness of $W(t)$ on $L^2(\H^d)$. A standard $TT^\ast$ argument shows that the remaining bounds in \eqref{Strichartz1} and \eqref{Strichartz2} follow from the estimate
\begin{equation}\label{Strichartz3}
\Big|\Big|\int_{a}^bc(t,s)W(t-s)F(s)\,ds\Big|\Big|_{L^{q,p_2}_I}\leq C_q||F||_{L^{q',p_1}_I},
\end{equation} 
for any measurable function $c:[a,b]\times[a,b]\to[-1,1]$, and any $p_1\in\{q',r'\}$, $p_2\in\{q,r\}$. Using \eqref{coro1}, the fact that $B(t)\leq C|t|^{-2/q}$, and the Hardy--Littlewood--Sobolev inequality, the left-hand side of \eqref{Strichartz3} is dominated by
\begin{equation*}
\begin{split}
&\Big|\Big|\int_{a}^b||W(t-s)F(s)||_{L^{p_2}(\H^d)}\,ds\Big|\Big|_{L^q(I)}\leq C_q\Big|\Big|\int_{a}^bB(t-s)||F(s)||_{L^{p_1}(\H^d)}\,ds\Big|\Big|_{L^q(I)}\\
&\leq C_q\Big|\Big|\int_{a}^b|t-s|^{-2/q}||F(s)||_{L^{p_1}(\H^d)}\,ds\Big|\Big|_{L^q(I)}\leq C_p||F||_{L^{q',p_1}_I},
\end{split}
\end{equation*}
which gives \eqref{Strichartz3}.
\end{proof} 

\section{A Morawetz inequality}\label{Mora}

Let
\begin{equation}\label{pexp}
p_\sigma=\min(2\sigma+2,(2d+4)/d).
\end{equation}
The main result in this section is the following Morawetz inequality:

\begin{proposition}\label{MoraIneq}
Assume that $T>0$, $u\in S^1_{p_\sigma}(-T,T)$ and
\begin{equation*}
i\partial_t u+\Delta u=u|u|^{2\sigma}\text{ on }(-T,T)\times\H^d.
\end{equation*}
Then
\begin{equation}\label{mor}
\|u\|_{L^{2\sigma+2}((t_1,t_2)\times\H^d)}^{2\sigma+2}\leq C_\sigma\sup_{t\in[t_1,t_2]}\|u(t)\|_{L^2(\H^d)}\|u(t)\|_{H^1(\H^d)},
\end{equation}
for any $t_1,t_2\in(-T,T)$.
\end{proposition}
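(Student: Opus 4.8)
The plan is to prove \eqref{mor} by the classical Lin--Strauss--Morawetz virial argument, with the Euclidean weight $|x|$ replaced by the radial function $a$ constructed in Lemma \ref{functiona}. Its three properties recorded there are used in exactly three places: $\D^2 a\ge 0$ supplies a favorable sign for the ``kinetic'' virial term, $\Delta a\equiv 1$ turns the nonlinear virial term into a clean multiple of $\int|u|^{2\sigma+2}$ (and makes the mass term vanish, since then $\Delta\Delta a\equiv 0$), and $\|\nabla a\|_{L^\infty(\H^d)}<\infty$ makes the boundary term of the argument controllable by $\|u\|_{L^2}\|u\|_{H^1}$. The last property is the one with no Euclidean analogue for a function satisfying $\Delta a=1$, and is where the hyperbolic geometry really enters.

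First I would record the local momentum balance law. For a smooth, rapidly decaying solution of $i\partial_t u+\Delta u=u|u|^{2\sigma}$, write $p_j=\mathrm{Im}(\bar u\,\D_j u)$ for the momentum density. A direct computation from the equation --- using that the Hessian $\D^2 u$ of the scalar $u$ is symmetric, so that no curvature terms enter --- gives
$$\partial_t p_j=\tfrac12\,\D_j\Delta(|u|^2)-2\,\D^k\bigl(\mathrm{Re}(\overline{\D_j u}\,\D_k u)\bigr)-\tfrac{\sigma}{\sigma+1}\,\D_j(|u|^{2\sigma+2}).$$
Then I would set $M(t)=\int_{\H^d}p_j\,\D^j a\,d\mu=\int_{\H^d}\mathrm{Im}\bigl(\bar u\,\g(\nabla a,\nabla u)\bigr)\,d\mu$, differentiate in $t$, substitute the identity above, and integrate by parts once in each term to move a derivative onto $\D^j a$. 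Using $\D_j\D^j a=\Delta a\equiv 1$: the first term contributes $-\tfrac12\int_{\H^d}\Delta(|u|^2)\,d\mu=0$; the second contributes $2\int_{\H^d}(\D^2 a)^{jk}\,\mathrm{Re}(\overline{\D_j u}\,\D_k u)\,d\mu$, which is $\ge 0$ since, splitting $u$ into real and imaginary parts, it is a sum of contractions of the positive semidefinite form $\D^2 a$ with nonnegative rank-one forms; and the third contributes $\tfrac{\sigma}{\sigma+1}\int_{\H^d}|u(t)|^{2\sigma+2}\,d\mu$. Hence $\frac{d}{dt}M(t)\ge\tfrac{\sigma}{\sigma+1}\int_{\H^d}|u(t)|^{2\sigma+2}\,d\mu$, and integrating over $[t_1,t_2]$ while bounding $|M(t)|\le\|\nabla a\|_{L^\infty}\int_{\H^d}|u(t)|\,|\nabla u(t)|\,d\mu\le C\|u(t)\|_{L^2}\|u(t)\|_{H^1}$ (Cauchy--Schwarz together with \eqref{sobiden}) yields
$$\tfrac{\sigma}{\sigma+1}\,\|u\|^{2\sigma+2}_{L^{2\sigma+2}((t_1,t_2)\times\H^d)}\le M(t_2)-M(t_1)\le 2C\sup_{t\in[t_1,t_2]}\|u(t)\|_{L^2}\|u(t)\|_{H^1},$$
which is \eqref{mor}.

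The main point requiring care --- and the only real obstacle --- is that for $u\in S^1_{p_\sigma}(-T,T)$ the intermediate expressions $\Delta(|u|^2)$ and $\D^2 u$ need not even be well defined, so the computation above is purely formal. I would make it rigorous by a standard approximation argument: establish the integrated identity
$$M(t_2)-M(t_1)=2\int_{t_1}^{t_2}\!\int_{\H^d}(\D^2 a)^{jk}\,\mathrm{Re}(\overline{\D_j u}\,\D_k u)\,d\mu\,dt+\tfrac{\sigma}{\sigma+1}\int_{t_1}^{t_2}\!\int_{\H^d}|u|^{2\sigma+2}\,d\mu\,dt$$
first for solutions with smooth, compactly supported data, for which all the integrations by parts are legitimate and produce no boundary contributions at infinity (here the boundedness of $\nabla a$ and $\D^2 a$ from Lemma \ref{functiona} is used), and then pass to the limit: by Theorem \ref{Main1}(a) the solution map is continuous from $H^1(\H^d)$ into $S^1_{p_\sigma}(-T,T)$, which combined with the a priori bounds \eqref{Lpbound} (and the fact that $2\sigma+2<2d/(d-2)$, so that $u\in L^{2\sigma+2}((t_1,t_2)\times\H^d)$) gives convergence of $\int_{t_1}^{t_2}\!\int_{\H^d}|u_n|^{2\sigma+2}$ and of $M_n(t_i)$. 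Since the right-hand side of the displayed identity involves only $u$, $\nabla u$ and $|u|^{2\sigma+2}$, the identity extends to all $u\in S^1_{p_\sigma}(-T,T)$, and discarding the first (nonnegative) term on the right-hand side yields \eqref{mor}. Beyond this limiting step and the covariant-derivative bookkeeping in the momentum identity, the argument is the routine virial computation.
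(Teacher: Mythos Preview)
Your overall strategy is exactly the paper's: run the virial computation with the weight $a$ from Lemma~\ref{functiona}, using $\Delta a\equiv 1$ (hence $\Delta^2 a\equiv 0$), $\D^2 a\ge 0$, and $|\nabla a|\in L^\infty$ in precisely the three places you list. Your formal local momentum identity and the resulting integrated inequality agree with the paper's formal computation \eqref{smothmor}.

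The gap is in the regularization you propose. You want to approximate $\phi$ by $\phi_n\in C_0^\infty(\H^d)$ and assert that the corresponding solutions $u_n$ are smooth enough that ``all the integrations by parts are legitimate''. But your pointwise momentum identity requires $\D_j\Delta u$, i.e.\ essentially three derivatives of $u$, and for small $\sigma$ (say $\sigma\le 1/2$) the map $z\mapsto z|z|^{2\sigma}$ is only $C^1$, not $C^2$; the nonlinear flow will not produce solutions with this much regularity even from $C_0^\infty$ data. The paper itself flags this in Remark~1.3, where persistence of higher regularity is only claimed for $\sigma\in\mathbb{N}$. Theorem~\ref{Main1}(a), which you invoke for the limiting step, gives continuity only at the $H^1$/$S^1_q$ level and supplies no additional smoothness for the approximants.

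The paper regularizes differently and more robustly: it keeps the given $u\in S^1_{p_\sigma}$ and sets $u_\epsilon=P_\epsilon u$ with $P_\epsilon$ the Fourier multiplier $e^{-\epsilon^2\lambda^2}$, so that $u_\epsilon(t)\in C^\infty(\H^d)$ for every $t$ regardless of $\sigma$, and inserts a spatial cutoff $\psi_\epsilon$ so all integrations by parts produce no boundary terms. Then $i\partial_t u_\epsilon+\Delta u_\epsilon=P_\epsilon(u|u|^{2\sigma})=u_\epsilon|u_\epsilon|^{2\sigma}+g_\epsilon$, the virial computation for $(u_\epsilon,\psi_\epsilon)$ is fully rigorous, and one passes to the limit $\epsilon\to 0$ using only the $S^1_{p_\sigma}$ bounds on $u$ and \eqref{Lpbound} to show that $\|g_\epsilon\|_{L^{p_\sigma'}}\to 0$ and the cutoff error terms vanish. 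This route never asks the nonlinear flow to propagate any regularity beyond $H^1$, which is why it works uniformly over the full range $\sigma\in(0,2/(d-2))$.
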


The main ingredient in the proof of Proposition \ref{MoraIneq} is the following lemma:

\begin{lemma}\label{functiona}
There is a smooth function $a:\H^d\to[0,\infty)$ with the following properties:
\begin{equation}\label{prp1}
\begin{split}
&\Delta a=1\quad\text{ on }\H^d;\\
&|\nabla a|=|\D^\al a\D_\al a|^{1/2}\leq C\quad\text{ on }\H^d;\\
&\D^2 a\geq 0\quad\text{ on }\H^d.
\end{split}
\end{equation}
\end{lemma}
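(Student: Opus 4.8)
The plan is to construct $a$ as an explicit radial function, i.e. $a = a(r)$ where $r = d(x,\mathbf{0})$, and then verify the three properties in \eqref{prp1} by direct computation using the radial form of the Laplacian on $\H^d$. Recall that for a radial function $f(r)$ on $\H^d$ one has $\Delta f = f'' + (d-1)\coth r \cdot f' = f'' + 2\rho \coth r\cdot f'$. So the equation $\Delta a = 1$ becomes the ODE $a'' + 2\rho\coth r\cdot a' = 1$, which I would solve by the integrating-factor method: writing $b = a'$, we get $(\sh r)^{2\rho}\,b)' = (\sh r)^{2\rho}$, hence $a'(r) = (\sh r)^{-2\rho}\int_0^r (\sh s)^{2\rho}\,ds$ (this choice makes $a'$ vanish to the correct order at $r=0$, guaranteeing smoothness at the origin), and then $a(r) = \int_0^r a'(s)\,ds$.

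First I would check smoothness: near $r=0$, $(\sh s)^{2\rho} \sim s^{d-1}$, so $\int_0^r (\sh s)^{2\rho}\,ds \sim r^d/d$ and $a'(r) \sim r/d$, which is the expected behavior of a smooth radial function at the origin (more carefully, one checks $a$ extends to a smooth function of $x$, not just of $r$, using that $a'$ is an odd-type smooth function of $r$ vanishing at $0$). Next, the bound $|\nabla a| = |a'(r)| \leq C$: for large $r$, $(\sh s)^{2\rho} \sim (1/2)^{2\rho}e^{2\rho s} = (1/2)^{d-1}e^{(d-1)s}$, so $\int_0^r(\sh s)^{2\rho}\,ds \sim C e^{(d-1)r}$ while $(\sh r)^{-2\rho}\sim C e^{-(d-1)r}$, giving $a'(r) \to $ const as $r\to\infty$; combined with continuity and the $r\to 0$ behavior this yields a uniform bound. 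The key point is that the exponential volume growth of $\H^d$ (as opposed to polynomial growth in $\R^d$, where the analogous $a'(r)\sim r/d$ is unbounded) is exactly what forces $|\nabla a|$ to be bounded — this is the geometric mechanism behind the whole Morawetz estimate.

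The main obstacle is the convexity condition $\D^2 a \geq 0$. For a radial function $a = a(r)$, the Hessian decomposes along the radial direction and the spherical directions: in the frame adapted to geodesic spheres, the radial-radial component is $a''(r)$ and the spherical components are $a'(r)\cdot(\D^2 r)$ applied to tangent directions, which by \eqref{posdef} equals $a'(r)\coth r$ times the induced metric on the sphere (since $\D^2 r = \coth r\,(\g - dr\otimes dr)$ on $\H^d$). So I need to show simultaneously $a''(r) \geq 0$ and $a'(r) \geq 0$ for all $r > 0$. Positivity of $a'$ is immediate from the formula since the integrand is positive. For $a'' \geq 0$: differentiating $a'' = 1 - 2\rho\coth r\cdot a'$, or better, analyzing $a'(r) = (\sh r)^{-2\rho}\int_0^r(\sh s)^{2\rho}ds$ directly — one computes $a''(r) = 1 - 2\rho\coth r \cdot (\sh r)^{-2\rho}\int_0^r (\sh s)^{2\rho}ds$ and must check this is nonnegative, i.e. $\int_0^r (\sh s)^{2\rho}\,ds \leq \frac{(\sh r)^{2\rho}\tanh r}{2\rho} = \frac{(\sh r)^{2\rho+1}}{2\rho\,\ch r}$. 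I would prove this inequality by noting both sides vanish at $r=0$ and comparing derivatives: the derivative of the right side is $(\sh r)^{2\rho} \cdot \frac{d}{dr}\!\left(\frac{\sh r}{2\rho\ch r}\right) + \frac{\tanh r}{2\rho}\cdot 2\rho(\sh r)^{2\rho-1}\ch r$, which after simplification should dominate $(\sh r)^{2\rho}$; alternatively, and more cleanly, integration by parts gives $\int_0^r(\sh s)^{2\rho}ds = \int_0^r (\sh s)^{2\rho-1}\sh s\, ds$ and one can set up a comparison showing $a''\geq 0$ follows from monotonicity properties of $\coth$. This elementary-inequality step is where the real work lies; everything else is bookkeeping, and the paper presumably writes down the explicit $a$ and dispatches these checks quickly.
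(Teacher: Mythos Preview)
Your proposal is correct and follows essentially the same route as the paper: construct the radial $a$ by solving the ODE with the integrating factor $(\sh r)^{2\rho}$, verify $|a'|\leq C$ by asymptotics, and reduce $\D^2 a\geq 0$ to $a'\geq 0$ plus the inequality $\int_0^r(\sh s)^{2\rho}\,ds\leq (\sh r)^{2\rho+1}/(2\rho\,\ch r)$ proved by comparing derivatives at $r=0$. The paper carries out exactly this derivative comparison, obtaining $\frac{d}{dr}\big((\sh r)^{d}/((d-1)\ch r)\big)=(\sh r)^{d-1}\cdot\frac{(d-1)(\ch r)^2+1}{(d-1)(\ch r)^2}\geq(\sh r)^{d-1}$, which is the clean computation your sketch was heading toward.
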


\begin{proof}[Proof of Lemma \ref{functiona}] We construct the radial function $a(x)=\widetilde{a}(r)$, with $\widetilde{a}(0)=\partial_r\widetilde{a}(0)=0$. The condition $\Delta a=1$ becomes, in polar coordinates,
\begin{equation}\label{a0}
\Big(\partial_r^2+(d-1)\frac{\cosh r}{\sinh r}\partial_r\Big)\widetilde{a}=1.
\end{equation}
By solving this ODE we get
\begin{equation}\label{a1}
\partial_r\widetilde{a}(r)=\frac{1}{(\sinh r)^{d-1}}\int_0^r(\sinh s)^{d-1}\,ds,
\end{equation}
and
\begin{equation}\label{a2}
\widetilde{a}(r)=\int_0^r\Big(\frac{1}{(\sinh s)^{d-1}}\int_0^s(\sinh t)^{d-1}\,dt\Big)\,ds.
\end{equation}
It follows easily from \eqref{a1} that $|\partial_r\widetilde{a}(r)|\leq C\min(1,r)$, thus
\begin{equation*}
|\D^\al a\D_\al a|\leq C.
\end{equation*}
The first two claims in \eqref{prp1} follow.

We show now that
\begin{equation}\label{a4}
\partial_r\widetilde{a}(r)\in [0,\infty)\text{ and }\partial^2_r\widetilde{a}(r)\in [0,\infty).
\end{equation}
This would suffice to prove the last claim in \eqref{prp1}, in view of \eqref{posdef}. The first inequality follows easily from \eqref{a1}. For the second inequality, using \eqref{a0}, we have
\begin{equation*}
\partial_r^2\widetilde{a}(r)=1-(d-1)\frac{\cosh r}{(\sinh r)^d}\int_0^r(\sinh s)^{d-1}\,ds.
\end{equation*}
Thus, for \eqref{a4} it suffices to prove that
\begin{equation*}
\int_0^r(\sinh s)^{d-1}\,ds\leq \frac{(\sinh r)^d}{(d-1)\cosh r}.
\end{equation*}
This inequality holds because the derivative of the function in the right-hand side is
\begin{equation*}
\frac{d(\sinh r)^{d-1}(\cosh r)^2-(\sinh r)^{d+1}}{(d-1)(\cosh r)^2}=(\sinh r)^{d-1}\frac{(d-1)(\cosh r)^2+1}{(d-1)(\cosh r)^2}\geq (\sinh r)^{d-1}.
\end{equation*}
\end{proof}

\begin{proof}[Proof of Proposition \ref{MoraIneq}] 
Formally, we define the Morawetz action
\begin{equation*}
M_a(t)=2\Im\int_{\H^d}\D^\a a(x)\cdot\overline{u}(x)\D_\al u(x)\,d\mu(x),
\end{equation*}
where $a$ is as in Lemma \ref{functiona}. A standard (formal) computation gives
\begin{equation}\label{smothmor}
\partial_tM_a(t)=4\Re\int_{\H^d}\D^\be\D^\al a\cdot\D_\be\overline{u}\cdot \D_\al u\,d\mu-\int_{\H^d}\Delta\Delta a\cdot |u|^2d\mu+\frac{2\sigma}{\sigma+1}\int_{\H^d}
\Delta a\cdot |u|^{2\sigma+2}\,d\mu.
\end{equation}
The bound \eqref{mor} follows (formally) by integrating in time and using Lemma \ref{functiona} and H\"older inequality. We justify all these formal manipulations below, using the hypothesis $u\in S^1_{p_\sigma}(-T,T)$.

For $\ep\in(0,1/10]$ let $u_\ep=P_\ep u$, where $P_\ep$ is the smoothing operator defined by the Fourier multiplier $\lambda\to e^{-\ep^2\lambda^2}$. Let $\psi_\ep:\H^d\to [0,1]$, $\psi_\ep(x)=\eta_{\leq 0}(\ep r)$. With $a$ as in Lemma \ref{functiona}, we define the Morawetz action $M_a:\R\to\mathbb{R}$,
\begin{equation*}
M_a(t)=2\Im\int_{\H^d}\psi_\ep(x)\D^\a a(x)\cdot\overline{u_\ep}(x)\D_\al u_\ep(x)\,d\mu(x).
\end{equation*}
Let $f_\ep=P_\ep(u|u|^{2\sigma})$, thus
\begin{equation*}
\partial_t u_\ep=i\Delta u_\ep-if_\ep\text{ and }\partial_t\overline{u_\ep}=-i\Delta\overline{u_\ep}+i\overline{f_\ep}.
\end{equation*}
We compute
\begin{equation*}
\begin{split}
\partial_t M_a(t)&=2\Im\int_{\H^d}\psi_\ep\D^\al a\cdot (\partial_t\overline{u_\ep}\cdot\D_\al u_\ep+\overline{u_\ep}\cdot \D_\al\partial_t u_\ep)\,d\mu\\
&=2\Im\int_{\H^d}\psi_\ep\D^\al a\cdot [(-i\Delta\overline{u_\ep}+i\overline{f_\ep})\cdot\D_\al u_\ep+\overline{u_\ep}\cdot \D_\al(i\Delta u_\ep-if_\ep)]\,d\mu\\
&=2\Re\int_{\H^d}\psi_\ep\D^\al a\cdot [(-\Delta\overline{u_\ep}+\overline{f_\ep})\cdot\D_\al u_\ep+\overline{u_\ep}\cdot \D_\al(\Delta u_\ep-f_\ep)]\,d\mu\\
&=\int_{\H^d}\psi_\ep \D^\al a\cdot (\overline{u_\ep}\cdot \D_\al\Delta u_\ep+u_\ep\cdot \D_\al\Delta\overline{u_\ep}-\Delta\overline{u_\ep}\cdot \D_\al u_\ep-\Delta u_\ep\cdot\D_\al\overline{u_\ep})\,d\mu\\
&+\int_{\H^d}\psi_\ep\D^\al a\cdot (\overline{f_\ep}\cdot\D_\al u_\ep+f_\ep\cdot\D_\al\overline{u_\ep}-\overline{u_\ep}\cdot\D_\al f_\ep-u_\ep\cdot\D_\al\overline{f_\ep})\,d\mu\\
&=I+II.
\end{split}
\end{equation*}
By integration by parts and using $\D_\al\D_\be v=\D_\be\D_\al v$ for any scalar $v$, we compute
\begin{equation*}
\begin{split}
&I=\int_{\H^d}-[\D_\al(\psi_\ep\D^\al a)](\overline{u_\ep}\Delta u_\ep+u_\ep\Delta\overline{u_\ep})-2\psi_\ep\D^\al a(\Delta\overline{u_\ep}\cdot \D_\al u_\ep+\Delta u_\ep\cdot\D_\al\overline{u_\ep})\,d\mu\\
&=\int_{\H^d}-(\psi_\ep\Delta a+\D_\al\psi_\ep\D^\al a)[\Delta(u_\ep\overline{u_\ep})-2\D_\be u_\ep\D^\be \overline{u_\ep}]\,d\mu\\
&-2\int_{\H^d}\psi_\ep\D^\al a(\D^\be\D_\be\overline{u_\ep}\cdot \D_\al u_\ep+\D^\be\D_\be u_\ep\cdot\D_\al\overline{u_\ep})\,d\mu\\
&=2\int_{\H^d}\D^\be(\psi_\ep\D^\al a)\cdot (\D_\be\overline{u_\ep}\D_\al u_\ep+\D_\be u_\ep\D_\al\overline{u_\ep})+\int_{\H^d}-\Delta(\psi_\ep\Delta a+\D_\al\psi_\ep\D^\al a)\cdot(u_\ep\overline{u_\ep})d\mu\\
&+\int_{\H^d}2(\psi_\ep\Delta a+\D_\al\psi_\ep\D^\al a)\cdot \D_\be u_\ep\D^\be \overline{u_\ep}+2\psi_\ep\D^\al a(\D_\be\overline{u_\ep}\cdot \D_\al \D^\be u_\ep+\D_\be u_\ep\cdot\D_\al\D^\be\overline{u_\ep})\,d\mu\\
&=2\int_{\H^d}(\psi_\ep\D^\be\D^\al a+\D^\be\psi_\ep\D^\al a)(\D_\be\overline{u_\ep}\cdot \D_\al u_\ep+\D_\be u_\ep\cdot\D_\al\overline{u_\ep})\,d\mu\\
&+\int_{\H^d}-\Delta(\psi_\ep\Delta a+\D_\al\psi_\ep\D^\al a)\cdot(u_\ep\overline{u_\ep})d\mu=A+B,
\end{split}
\end{equation*}
since $\D_\be\overline{u_\ep}\cdot \D_\al \D^\be u_\ep+\D_\be u_\ep\cdot\D_\al\D^\be\overline{u_\ep}=\D_\al(\D_\be u_\ep\D^\be\overline{u_\ep})$. We write $f_\ep=u_\ep|u_\ep|^{2\sigma}+g_\ep$ and use the identity $\overline{u_\ep}|u_\ep|^{2\sigma}\cdot\D_\al u_\ep+u_\ep|u_\ep|^{2\sigma}\cdot\D_\al\overline{u_\ep}=(1/(\sigma+1))\D_\al( |u_\ep|^{2\sigma+2})$ to compute
\begin{equation*}
\begin{split}
II&=2\int_{\H^d}\psi_\ep\D^\al a(\overline{f_\ep}\cdot\D_\al u_\ep+f_\ep\cdot\D_\al\overline{u_\ep})+\D_\al(\psi_\ep\D^\al a)\cdot (\overline{f_\ep}u_\ep+f_\ep\overline{u_\ep})\,d\mu\\
&=2\int_{\H^d}\psi_\ep\D^\al a(\overline{g_\ep}\cdot\D_\al u_\ep+g_\ep\cdot\D_\al\overline{u_\ep})+\D_\al(\psi_\ep\D^\al a)\cdot (\overline{g_\ep}u_\ep+g_\ep\overline{u_\ep})\,d\mu\\
&+\frac{2\sigma}{\sigma+1}\int_{\H^d}\D_\al(\psi_\ep\D^\al a)\cdot |u_\ep|^{2\sigma+2}\,d\mu=C+D.
\end{split}
\end{equation*}
We integrate these identities on the interval $[t_1,t_2]$ to conclude that
\begin{equation*}
M_a(t_2)-M_a(t_1)=\int_{t_1}^{t_2}(A+B+C+D)\,dt.
\end{equation*}
We use now that $u\in S^1_{p_\sigma}(-T,T)$, thus $\lim_{\ep\to 0}||u_\ep-u||_{S^1_{p_\sigma}(-T',T')}=0$ and, using \eqref{Lpbound}, $$\lim_{\ep\to 0}||g_\ep||_{L^{p'_\sigma}((-T',T')\times\H^d)}=0$$ for any $T'<T$. We let $\ep\to 0$, using \eqref{prp1}, to conclude that
\begin{equation*}
\lim_{\ep\to 0}\int_{t_1}^{t_2}A\,dt=2\int_{\H^d\times[t_1,t_2]}\D^\be\D^\al a\cdot(\D_\be\overline{u}\cdot \D_\al u+\D_\be u\cdot\D_\al\overline{u})\,d\mu dt,
\end{equation*}
\begin{equation*}
\lim_{\ep\to 0}\int_{t_1}^{t_2}B\,dt=\lim_{\ep\to 0}\int_{t_1}^{t_2}C\,dt=0,
\end{equation*}
and
\begin{equation*}
\lim_{\ep\to 0}\int_{t_1}^{t_2}D\,dt=\frac{2\sigma}{\sigma+1}\int_{\H^d\times[t_1,t_2]}|u|^{2\sigma+2}\,d\mu dt.
\end{equation*}
Since $|M_a(t)|\leq C\sup_{t\in[t_1,t_2]}\|u(t)\|_{L^2(\H^d)}\|u(t)\|_{H^1(\H^d)}$ (using \eqref{prp1}), it follows that
\begin{equation*}
\begin{split}
2\int_{\H^d\times[t_1,t_2]}\D^\be\D^\al a\cdot(\D_\be\overline{u}\cdot \D_\al u+\D_\be u\cdot\D_\al\overline{u})\,d\mu dt+\frac{2\sigma}{\sigma+1}\int_{\H^d\times[t_1,t_2]}|u|^{2\sigma+2}\,d\mu dt\\
\leq C\sup_{t\in[t_1,t_2]}\|u(t)\|_{L^2(\H^d)}\|u(t)\|_{H^1(\H^d)}
\end{split}
\end{equation*}
The proposition follows using the last inequality in \eqref{prp1}.
\end{proof}

\section{Proof of the main theorem}\label{proofScat}

In this section we complete the proof of Theorem \ref{Main1}. Part (a) follows from the standard Kato method \cite{Ka}, see also Chapter 4 in  \cite{Cazenave:book}, and the Euclidean-type Strichartz estimates 
\begin{equation*}
\|u\|_{L^{\infty,2}_{(-T,T)}\cap L^{q,r}_{(-T,T)}}\leq C_q\|u(0)\|_{L^2(\H^d)}+C_q\|f\|_{{L^{1,2}_{(-T,T)}}+L^{q',r'}_{(-T,T)}},
\end{equation*}
where $f=(i\partial_t+\Delta)u$ on $(-T,T)\times\H^d$. These Strichartz estimates have been proved in section \ref{Strichartz}.

To prove part (b) of Theorem \ref{Main1}, using the standard argument presented in Section 3.6 in  \cite{Tao:book}, it remains to prove the uniform bound \eqref{sol2}
\begin{equation}\label{sol2.1}
||u||_{S^1_q(-T,T)}\leq C(\sigma,q,||\phi||_{H^1(\H^d)})
\end{equation}
for any solution solution $u\in S^1_q(-T,T)$ of \eqref{eq1.1}. We may assume $q\leq p_\sigma$, where $p_\sigma$ is defined in \eqref{pexp}. The main ingredient is the uniform bound in Proposition \ref{MoraIneq}.
Let $\epsilon>0$  be small to be determined later. From the Morawetz estimate in Proposition \ref{MoraIneq} we can divide
the interval $(-T,T)$ into finitely many intervals $I_1,...., I_m$ such that  
\begin{equation}\label{morloc}
\|u\|_{L^{2\sigma+2}(I_j\times\H^d)}\leq \epsilon,
\end{equation}
for all $j=1,...,m$. Fix one of these intervals, and assume it is $I=[a,b)$. Recall the uniform estimate \eqref{Lpbound}
\begin{equation}\label{Lpbound2}
\|(-\Delta)^{1/2}f\|_{L^{p_1}(I\times\H^d)}+\|f\|_{L^{p_2}(I\times\H^d)}\leq C_{p_2}\|f\|_{S^1_q(I)},
\end{equation}    
for any $f\in S^1_q(I)$, $p_1\in[q,(2d+4)/d]$, and $p_2\in[q,(2d+4)/(d-2))$. Using the Duhamel representation of the solution $u$, Proposition \ref{StricEst}, and the fact that $L^{p'_\sigma,p'_\sigma}_{I}\hookrightarrow L^{1,2}_{I}+L^{q',r'}_{I}+L^{q',q'}_I$, we have
\begin{eqnarray}
\|u\|_{S^1_q(I)}&\leq &C_q\|u(a)\|_{H^1}+ C_q\|\,|u|^{2\sigma}|\nabla(u)|\,\|_{L^{p'_\sigma}(I\times\H^d)}\nonumber\\
 \label{scat}
&\leq &C_q\|u(a)\|_{H^1}+C_q\|(-\Delta)^{1/2}u\|_{L^{p_\sigma}(I\times\H^d)}\|u\|_{L^{2\sigma q_\sigma}(I\times\H^d)}^{2\sigma},
\end{eqnarray}
where $$q_\sigma=\frac{p_\sigma p'_\sigma}{p_\sigma-p'_\sigma}=\frac{\max(2\sigma+2,(d+2)\sigma)}{2\sigma}.$$
Since $\sigma\in(0,2/(d-2))$, we observe that
$$2\sigma q_\sigma\in[2\sigma+2,(2d+4)/(d-2)),$$
thus, by interpolation and using \eqref{morloc}, we can continue \eqref{scat} with
\begin{equation}\label{scat2}
\begin{split}
&\leq C(q,\|\phi\|_{H^1})+C_{q,\sigma}\|u\|_{S^1_q(I)}^{1+2\sigma(1-\theta)}\|u\|_{L^{2\sigma+2}(I\times\H^d)}^{\theta\cdot2\sigma}\\
&\leq C(q,\|\phi\|_{H^1})+C_{q,\sigma}\epsilon^{\theta\cdot2\sigma}\|u\|_{S^1_q(I)}^{1+2\sigma(1-\theta)}.
\end{split}
\end{equation}
A continuity argument applied for $\epsilon=\epsilon(\sigma,q,\|\phi\|_{H^1})$ small enough gives the uniform bound \eqref{sol2.1} when $S^1_q(-T,T)$ is replaced by $S^1_q(I)$. Repeting this for the $m$ intervals gives the total bound \eqref{sol2.1}.

\end{document}